\newcommand{\R}{{\mathbb R}}
\newcommand{\Z}{{\mathbb Z}}
\newcommand{\bZp}{{{\mathbb Z}^+}}
\DeclarePairedDelimiter\abs{\lvert}{\rvert} 
\DeclarePairedDelimiter\norm{\lVert}{\rVert} 
\newcommand{\supp}[1]{\text{supp} \left( #1 \right)}
\newcommand{\mc}{\text{, }}
\newcommand{\md}{\text{.}}
 \newtheorem{theorem}{Theorem}[section]
 \newtheorem{Def}[theorem]{Definition}
 \newtheorem{proposition}[theorem]{Proposition}
 \newtheorem{lemma}[theorem]{Lemma}
 \newtheorem{Example}[theorem]{Example}
 \numberwithin{equation}{section}
 \renewcommand{\rm}{\normalshape}
\begin{document}

\title  {Riesz bases of exponentials for multi-tiling measures}


\author{Chun-Kit Lai}

\address{Department of Mathematics, San Francisco State University,
1600 Holloway Avenue, San Francisco, CA 94132.}
 \email{cklai@sfsu.edu}

\author{Alexander Sheynis}
\address{Department of Mathematics, San Francisco State University,
1600 Holloway Avenue, San Francisco, CA 94132.}
 \email{asheynis@mail.sfsu.edu}

\subjclass[2010]{Primary 42B15}
\keywords{Riesz bases, Fourier frames, Multi-tiles, Spectral Measures }

\begin{abstract}
Let $G$ be a closed subgroup of $\R^d$ and let $\nu$ be a Borel probability measure admitting a Riesz basis of exponentials with frequency sets in the dual group $G^{\perp}$. We form a multi-tiling measure $\mu = \mu_1+...+\mu_N$ where $\mu_i$ is translationally equivalent to $\nu$ and different $\mu_i$ and $\mu_j$ have essentially disjoint support. We obtain some necessary and sufficient conditions for $\mu$ to admit a Riesz basis of exponentials . As an application,  the square boundary, after a rotation, is a union of two fundamental domains of $G = \Z\times \R$ and can be regarded as a multi-tiling measure. We show that, unfortunately, the square boundary does not admit a Riesz basis of exponentials of the form as a union of translate of discrete subgroups $\Z\times \{0\}$. This rules out a natural candidate of potential Riesz basis for the square boundary. 
\end{abstract}

\maketitle

\section{Introduction}
This paper is about multi-tiling Riesz basis spectrality in singular measure and non-discrete subgroup settings. Throughout the paper, $\Lambda$ is a countable set in $\R^d$ and we denote by $E(\Lambda) = \{e^{2\pi i \lambda\cdot x}: \lambda\in \Lambda\}$ the exponential system with frequency set $\Lambda$.  Let $\nu$ be a  Borel probability measure on $\R^d$. We say that $\nu$ is a {\bf spectral measure} if there exists an exponential orthonormal basis $E(\Lambda)$ for $L^2(\nu)$ and $\Lambda$ is called a {\bf spectrum} for $\nu$. $\nu$ is called a {\bf frame-spectral measure} if $E(\Lambda)$ forms a frame for $L^2(\nu)$. i.e. there exists $0<A\le B<\infty$ such that 
$$
A \int |f|^2 d\nu \le \sum_{\lambda\in \Lambda} \left|\int f(x) e^{2\pi i \lambda\cdot x}d\nu (x)\right|^2 \le B \int |f|^2 d\nu.
$$
$\nu$ is called a {\bf Riesz-spectral measure} if $E(\Lambda)$ forms a {\bf Riesz basis} for $L^2(\nu).$ It means that $E(\Lambda)$ is both a frame and a Riesz sequence. i.e.  there exists $0<A\le B<\infty$ such that 
$$
A \sum_{\lambda\in \Lambda} |c_{\lambda}|^2 \le \int \left|\sum_{\lambda\in \Lambda}  c_{\lambda}e^{2\pi i \lambda\cdot x}\right|^2 d\nu (x) \le B \sum_{\lambda\in \Lambda} |c_{\lambda}|^2.
$$
For general theory of frame and Riesz bases on Hilbert space, one can refer to \cite{Chr,H}.

\subsection{Connection to Tiling.} Spectral measures have a close, but yet delicate connection to tiling. Fuglede \cite{Fug} initiated the study of spectral sets (i.e. the normalized Lebesgue measure on the measurable set is a spectral measure). He proposed the conjecture that  a set is spectral if and only if it is a translational tile. The conjecture in its full generality was finally disproved by Tao \cite{T}, followed by Kolountzakis and Matolcsi \cite{KM06}. It remains open in dimension 1 and 2. 

\medskip

Despite being incorrect in its full generality, Fuglede's conjecture remains true under some natural additional assumptions. Fuglede \cite{Fug} proved that the conjecture is true if we assume that the spectrum is a lattice on $\R^d$, in which the spectral set will be a translational tile with tiling set being the dual lattice (see \cite{I} for a simpler proof). Recently, Lev and Matolcsi proved that Fuglede's conjecture is true if we assume the set is convex \cite{LM2022}.

\medskip

  Apart from spectral sets, sets admitting Riesz bases of exponentials also has a close connection to multi-tiling. A set $\Omega$ is said to be a {\bf multi-tile } by a lattice $\Gamma$ in $\R^d$ if there exists an integer $k>0$ such that 
  $$
  \sum_{\gamma\in\Gamma} {\bf 1}_{\Omega}(x+\gamma) = k. \ \mbox{a.e.}
$$
When $k=1$, $\Omega$ is a translational tile. For a group $G$, We will denote by $G^{\perp}$ its dual group.  The following theorem was first proved by Kolountzakis \cite{K15} (see \cite{GLev} for an earlier result using quasicrystals).  

\begin{theorem}
    Let $\Omega$ be a bounded multi-tile by a discrete full-rank lattice $\Gamma$ on $\R^d$. Then there exists $t_1,...,t_k$ such that $E\left(\bigcup_{i=1}^k (\Gamma^{\perp}+t_i)\right)$ is a Riesz basis of exponentials for $L^2(\Omega)$. 
\end{theorem}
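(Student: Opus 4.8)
\medskip
\noindent\textbf{Proof proposal.}\ The plan is to reduce to the lattice $\Z^d$, cut $\Omega$ into $k$ fundamental domains, recognise the candidate exponential system as a matrix-modulation system, and then choose the translates $t_1,\dots,t_k$ generically so that the $k\times k$ matrix-valued function governing it becomes uniformly invertible; boundedness of $\Omega$ will enter at exactly one point, to ensure that only finitely many ``configurations'' of lattice points occur. After the linear change of variables carrying $\Gamma$ to $\Z^d$ — which preserves the Riesz-basis property and turns a union of $\Gamma^{\perp}$-translates into a union of $\Z^d$-translates — I may assume $\Gamma=\Gamma^{\perp}=\Z^d$. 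The multi-tiling hypothesis $\sum_{\gamma\in\Z^d}\mathbf 1_{\Omega}(x+\gamma)=k$ a.e.\ lets me build a measurable partition $\Omega=\Omega_1\sqcup\cdots\sqcup\Omega_k$ modulo a null set: for $y\in[0,1)^d$ the set $\{\gamma\in\Z^d:\ y+\gamma\in\Omega\}$ has exactly $k$ elements for a.e.\ $y$, and listing them in a fixed ordering of $\Z^d$ as $\gamma_1(y),\dots,\gamma_k(y)$ defines measurable maps $\gamma_j$; I set $\tau_j(y)=y+\gamma_j(y)$ and $\Omega_j=\{\tau_j(y):\ y\in[0,1)^d\}$, each a measurable fundamental domain for $\Z^d$. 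The sliding maps $U_j\colon L^2(\Omega_j)\to L^2([0,1)^d)$, $f\mapsto f\circ\tau_j$, are unitary and assemble into a unitary map $L^2(\Omega)\to L^2\bigl([0,1)^d;\mathbb C^k\bigr)$.

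Under this identification the exponential $e^{2\pi i(n+t_i)\cdot x}$ ($n\in\Z^d$, $1\le i\le k$) is sent to the vector-valued function $y\mapsto e^{2\pi i n\cdot y}\,\Phi_i(y)$, where $\Phi_i(y)=\bigl(e^{2\pi i t_i\cdot\tau_1(y)},\dots,e^{2\pi i t_i\cdot\tau_k(y)}\bigr)^{\!\top}$; here one uses that $n\cdot\gamma_j(y)\in\Z$. Let $\Phi(y)$ be the $k\times k$ matrix with columns $\Phi_1(y),\dots,\Phi_k(y)$. Then the synthesis operator of $E\bigl(\bigcup_{i=1}^k(\Z^d+t_i)\bigr)$ factors as $(c_{n,i})\mapsto\bigl[\,y\mapsto\Phi(y)g(y)\,\bigr]$, where $g$ is the $\mathbb C^k$-valued function whose Fourier coefficient at $n\in\Z^d$ is $(c_{n,1},\dots,c_{n,k})$; since $(c_{n,i})\mapsto g$ is unitary, $E\bigl(\bigcup_{i=1}^k(\Z^d+t_i)\bigr)$ is a Riesz basis for $L^2(\Omega)$ if and only if multiplication by $\Phi(\cdot)$ is a bounded and boundedly invertible operator on $L^2([0,1)^d;\mathbb C^k)$. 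Since the entries of $\Phi(y)$ are unimodular we have $\|\Phi(y)\|\le k$ automatically, and the problem reduces to producing an essential positive lower bound for $|\det\Phi(y)|$.

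Now I choose the $t_i$ generically. Scaling the $i$-th column of $\Phi(y)$ by the unimodular constant $e^{-2\pi i t_i\cdot y}$ does not change $|\det\Phi(y)|$ and replaces the $(j,i)$ entry by $e^{2\pi i t_i\cdot\gamma_j(y)}$; hence $|\det\Phi(y)|$ depends on $y$ only through the set $N(y)=\{\gamma_1(y),\dots,\gamma_k(y)\}$ of $k$ distinct integer vectors. Because $\gamma_j(y)\in\bigl(\Omega-[0,1)^d\bigr)\cap\Z^d$ and $\Omega$ is bounded, all these vectors lie in a fixed finite set, so only finitely many configurations $N$ occur. For a fixed configuration $N=\{n_1,\dots,n_k\}$ of distinct vectors in $\Z^d$, the function
\[
D_N(t_1,\dots,t_k)=\det\bigl[e^{2\pi i t_i\cdot n_j}\bigr]_{1\le i,j\le k}=\sum_{\sigma\in S_k}\operatorname{sgn}(\sigma)\exp\Bigl(2\pi i\sum_{i=1}^{k}t_i\cdot n_{\sigma(i)}\Bigr)
\]
is an exponential polynomial on $(\R^d)^k$ whose $k!$ frequencies are pairwise distinct, since the $n_j$ are distinct; hence $D_N\not\equiv 0$, and being real-analytic its zero set has Lebesgue measure zero in $(\R^d)^k$. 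Choosing $(t_1,\dots,t_k)$ outside the finite union of these null sets gives $|\det\Phi(y)|\ge\min_N|D_N(t_1,\dots,t_k)|>0$ for a.e.\ $y$, which completes the argument.

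The main obstacle is upgrading the a.e.\ invertibility of $\Phi(y)$ to a \emph{uniform} lower bound $|\det\Phi(y)|\ge c>0$, and this is exactly where boundedness of $\Omega$ is indispensable: it forces $\Phi(y)$ to depend on $y$ through only finitely many integer configurations $N$, each of which excludes merely a Lebesgue-null set of admissible $(t_1,\dots,t_k)$. The two delicate technical points along the way are the measurable selection of $\gamma_1,\dots,\gamma_k$ and the verification that the $k!$ exponential frequencies of $D_N$ are pairwise distinct — which is what makes $D_N$ a nonzero real-analytic function and hence forces its zero set to be negligible.
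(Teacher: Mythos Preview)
Your proof is correct and follows essentially the same route as the paper: the paper recovers this theorem from its general Theorem~\ref{thm:MainResult} (the determinant condition on the matrix $M_x(\mathbf t)$, which is your $\Phi(y)$ up to unimodular column scalings) together with the observation at the start of Section~4 that for a bounded multi-tile over a discrete lattice only finitely many lattice configurations occur, so the determinant condition holds for almost every $\mathbf t$. The only cosmetic difference is that you package the calculation as a unitary equivalence $L^2(\Omega)\cong L^2([0,1)^d;\mathbb C^k)$ and a matrix multiplication operator, whereas the paper verifies the frame and Riesz-sequence inequalities by direct computation; the underlying linear-algebraic content is identical.
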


The boundedness of $\Omega$ cannot be removed in the statement \cite{AAC}.  An intensive study for the unbounded multi-tiles was carried out in the subsequent papers \cite{CC, CKM} and a satisfactory answer was given using the Bohr topology \cite{CKM}. Following the terminology used in \cite{CC}, we will say that a set $\Lambda$ is {\bf structured} if we can write 
$$
\Lambda = \bigcup_{i=1}^k (\Gamma+t_i)
$$
where $\Gamma$ is a discrete subgroup of $\R^d$ and $t_i$ is some distinct points of $\R^d$ that are distinct residue modulo $\Gamma$.

\medskip

    The first singular spectral measures, not being an atomic measure, was first discovered by Jorgensen and Pedersen \cite{JP98}. They found that the middle-fourth Cantor measure is a spectral measure with a sparse spectrum inside $\Z$. However, the usual commonly-known middle-third Cantor measure is not a spectral measure. There have been serious studies of fractal spectral measures since the first discovery of such measures (see \cite{DLW} for a survey).  The construction of singular spectral measures also possess delicate connections to tiling. One can refer to \cite{GL} for more details. Beside spectral measures of fractal types, the surface measures on manifold and polytopes were also studied by Lev and Iosevich-Liu-Lai-Wyman \cite{Lev,ILLM}. It was shown that all surface measures of polytopes admits a frame of exponentials, while the surface measure of a sphere does not admit any such frames.

\medskip

\subsection{Main Contribution.} The purpose of this paper is to prove a type of  singular multi-tiling measure is Riesz spectral and develop the related theory as a generalization to the classical multi-tile setting. The motivation of this study was first initiated  by an open-ended problem raised by Iosevich to the first-named author. Iosevich asked if there could be a relationship between the spectrality on the boundary singular measures and the spectrality of the domain. If such connection exists and we know that the boundary of the disk has no frame of exponentials, there may be an answer to the widely open problem that if the unit disk admits a Riesz basis of exponentials.   Initiated by this question and the question asking the circle seems difficult, we investigate the following simpler but natural question.

\medskip

{\bf (Qu):} Does the boundary of the unit square admit a Riesz basis of exponentials?

\medskip

 Unfortunately, we are not able to obtain an affirmative answer to the proposed question. However, we observe that boundary of a unit square, can be written as a union of two fundamental domains of the closed subgroup $\Z\times \R$ (See Figure \ref{fig:SqBdryNoRB}). This leads us to generalizing the  multi-tiling into a singular measure setting.

	\begin{figure}[h]
		\begin{center}
			\resizebox{8cm}{!}
			{
				\begin{tikzpicture}[decoration={markings,
					}]
					\draw[line width=1.5pt,-] (-5,0) -- (5,0) coordinate (xaxis);
					\draw[line width=1.5pt,-] (0,-3) -- (0,3) coordinate (yaxis);
					
					\foreach \x in {-3,0,3}
					{
						\draw[dashed] (\x , -3) -- (\x, 3);
					}
					
					\coordinate (0) at (0, 0);
					
					\path[draw,line width=1pt,postaction=decorate] (0, 0) -- (1.5, 1.5) -- (3, 0) -- (1.5, -1.5) -- (0, 0);
					
					\node[below] at (xaxis) {$x$};
					\node[left] at (yaxis) {$y$};
					\node[below left] {$O$};
					\node[below right] at (3, 0) {$1$};
					\node[below left] at (-3, 0){$-1$};
					\node[above] at (1.5, 1.5){$\mu_{1}$};
					\node[below] at (1.5, -1.5) {$\mu_{2}$};
				\end{tikzpicture}
			}
			\caption{The boundary of the square overlaid with $G = \Z \times \R$. $\mu_1$ is the one-dimensional Lebesgue measure on the $L$-shape above the $x$-axis and $\mu_2$ is the corresponding measure below the $x$-axis. the surface measure on the square is $\mu_1+\mu_2$ and their support in a union of two fundamental domains of $\R\times\Z$.}
			\label{fig:SqBdryNoRB}
		\end{center}
	\end{figure}
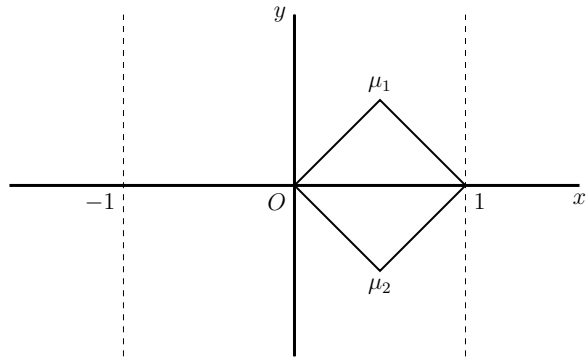

In a similar favor, the authors \cite{LLP} also studied a similar question on the additive space of two line measures. It is still unknown if the Lebesgue measure supported on the ``plus space" given by 
$$
\frac12 \left( m_{[-1/2,1/2]}\times \delta_0 + \delta_0\times m_{[-1/2,1/2]}\right)
$$
is Riesz-spectral. Indeed, after a rotation of $45^{\circ}$, the plus space is also a union of two fundamental domains of a closed subgroup $\Z\times \R$. 

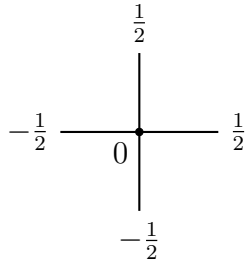
\begin{figure}[h]
  \centering
\begin{tikzpicture}[scale=.7]
  \draw[thick] (-1.5,0) node[anchor=east] {$- \frac 1 2 $}  -- (1.5,0) 
  node[anchor=west] {$\frac 1 2$};
  \draw[thick] (0,-1.5) node[anchor=north] {$- \frac 1 2$} -- (0,1.5) 
  node[anchor=south] {$\frac 1 2$};
  \filldraw[black] (0,0) circle (2pt) node[anchor=north east] {0};
\end{tikzpicture}
\caption{The Plus Space}
  \label{fig_Plus}
\end{figure}

\medskip

We formulate this observation in a generalized multi-tiling perspective (Section 2). For a given Riesz-spectral measure $\nu$  supported in a fundamental domain $X$ and with a Riesz spectrum $\Lambda\subset G^{\perp}$, we will introduce an associated multi-tiling measure and study its Riesz-spectrality with a structured Riesz basis. A complete characterization will be given in Theorem \ref{thm:MainResult} and its proof will be given in Section 3.   This formulation is different from previous work in two senses. First, the group may not be discrete and it is only a closed subgroup in $\R^d$. Second, the measure  $\nu$ does not need to be a spectral set with a lattice spectrum. Instead,  we can start with  a singular measure admitting a Riesz basis of exponentials. This will offer a lot of flexibility that can generate a Riesz-spectral measure. 

\medskip

After studying the new formulation, we obtain several criteria for which we can guarantee the multi-tiling measure admits a structured Riesz basis of exponentials (Section 4). Finally, we show that

\begin{theorem} \label{thm:BndrySqResult}
	The boundary of the square does not admit a Riesz basis of the type $\bigcup_{k = 1}^{N} (\Lambda + t_{k})$ where $\Lambda = \Z \times \{ 0 \}$.
\end{theorem}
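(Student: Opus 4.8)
The plan is to show that no frequency set of the stated form can give even a \emph{frame} for $L^2(\mu)$, where $\mu=\mu_1+\mu_2$ is the boundary measure of the (rotated) square; since every Riesz basis is a frame, this is enough. The point will be that the lower frame inequality degenerates along a family of test functions localised at the two vertices of the square that sit on the boundary of the fundamental domains of $G=\Z\times\R$, precisely because such frequencies all lie on finitely many horizontal lines and cannot separate the two ``sheets'' of the boundary near those vertices.

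First I would fix coordinates as in Figure~\ref{fig:SqBdryNoRB}. After the rotation the square boundary is $X_1\cup X_2$, where $X_1$ (carrying the arclength measure $\mu_1$) is the ``$L$-shape'' above the $x$-axis and $X_2$ (carrying $\mu_2$) the one below; each $X_j$ is a fundamental domain of $G=\Z\times\R$. With $\psi(x)=\min(x,1-x)$ one has $X_1=\{(x,\psi(x)):x\in[0,1]\}$ and $X_2=\{(x,-\psi(x)):x\in[0,1]\}$, the two pieces meet exactly at the vertices $(0,0)$ and $(1,0)$, and near $x=0$ simply $\psi(x)=x$ with arclength element $\sqrt{2}\,dx$. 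A frequency set of the claimed type is $\Lambda=\bigcup_{k=1}^{N}\bigl((\Z\times\{0\})+t_k\bigr)$ with $t_k=(s_k,\tau_k)\in\R^2$, so $E(\Lambda)=\{e^{2\pi i((n+s_k)x+\tau_k y)}:n\in\Z,\ 1\le k\le N\}$.

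Next, for small $\varepsilon\in(0,1/2)$ I would take $f_\varepsilon\in L^2(\mu)$ supported near the shared vertex $(0,0)$: set $f_\varepsilon=+1$ on $X_1\cap\{0<x<\varepsilon\}$, $f_\varepsilon=-1$ on $X_2\cap\{0<x<\varepsilon\}$, and $f_\varepsilon=0$ elsewhere, so that $\|f_\varepsilon\|_{L^2(\mu)}^2=2\sqrt{2}\,\varepsilon$. For $\lambda=(n+s_k,\tau_k)$ the two sheets contribute to $\langle f_\varepsilon,e_\lambda\rangle$ with opposite signs of $f_\varepsilon$ and opposite values $y=\pm\psi(x)=\pm x$, so the contributions combine into $\sqrt{2}\int_0^{\varepsilon}e^{-2\pi i(n+s_k)x}\bigl(e^{-2\pi i\tau_k x}-e^{2\pi i\tau_k x}\bigr)\,dx$, which carries the vanishing factor $\sin(2\pi\tau_k x)$. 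Applying Parseval for $\{e^{2\pi i n x}\}_{n\in\Z}$ on $L^2[0,1]$ in the index $n$ (for each fixed $k$) together with the elementary bound $\sin^2u\le u^2$, I would obtain $\sum_{\lambda\in\Lambda}|\langle f_\varepsilon,e_\lambda\rangle|^2\le c\,\varepsilon^3$, where $c$ depends only on $N$ and $\sum_k\tau_k^2$, not on $\varepsilon$.

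Comparing the two estimates gives $\sum_{\lambda\in\Lambda}|\langle f_\varepsilon,e_\lambda\rangle|^2/\|f_\varepsilon\|_{L^2(\mu)}^2=O(\varepsilon^2)\to0$ as $\varepsilon\to0$, so no positive lower frame bound can hold; hence $E(\Lambda)$ is not a frame, and a fortiori not a Riesz basis, for $L^2(\mu)$. (For $N=2$ the same conclusion also follows from the characterization in Theorem~\ref{thm:MainResult}, whose criterion fails because the relevant determinant, which equals $2i\sin\bigl(2\pi(\tau_1-\tau_2)\psi(x)\bigr)$, is not bounded away from $0$ as $x\to0$.) I expect the only genuinely nontrivial step to be conceptual rather than computational: recognising that the obstruction is \emph{local}, living at the vertices that lie on the fundamental-domain boundary where $\psi$ vanishes, and that what actually breaks down is the quantitative lower frame inequality — not mere completeness, which fails when $N=1$ but not in general. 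Once $f_\varepsilon$ is chosen with this sign pattern, the estimates above are routine.
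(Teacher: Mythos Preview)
Your argument is correct, and it proceeds along a genuinely different route from the paper's own proof. The paper establishes the result by appealing to its general machinery: it identifies the square boundary as the multi-tiling measure $\mu=\mu_1+\mu_2$ relative to $G=\Z\times\R$, computes $|g_1(x,0)-g_2(x,0)|=2x$, and then invokes the contrapositive of Theorem~\ref{thm:NecessaryCondition} together with the characterization in Theorem~\ref{thm:MainResult} to conclude that no structured spectrum $(\Lambda+t_1)\cup(\Lambda+t_2)$ works. You instead bypass the abstract framework entirely and exhibit the obstruction directly, building the sign-alternating test functions $f_\varepsilon$ near the vertex and using Parseval in the $\Z$-index to get the clean $O(\varepsilon^3)$ bound against $\|f_\varepsilon\|^2\asymp\varepsilon$. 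This is more elementary --- it needs none of Sections~3--4 --- and it also treats arbitrary $N$ in one stroke, whereas the paper's argument, going through Theorem~\ref{thm:MainResult}, is literally written for the case $N=2$ matching the two-piece decomposition of $\mu$. Your parenthetical remark at the end recovers exactly the paper's determinant computation, so you have in fact located both proofs. The trade-off is that the paper's route illustrates the general theory it has built, while yours isolates the concrete analytic phenomenon (cancellation forced by $\sin(2\pi\tau_k x)$) and would transfer immediately to the convex-polygon and plus-space variants mentioned after the theorem.
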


The same argument also works for any boundary of any convex polygons as well as the plus space. This result ruled out a natural choice of candidates for the Riesz spectra for these surface measures. It will be an interesting problem to see how to construct a Riesz basis or disprove the existence of the Riesz basis for the square boundary. Some other examples will be given in Section 5. 

\medskip

\section{The multi-tiling measure Setup}
\subsection{Closed subgroup and fundamental domains.} Throughout this paper,   $G$ is a closed additive subgroup of $\R^{d}$ and $G$ may not be discrete. A {\bf fundamental domain} of $G$ is a Borel subset of ${\mathbb R}^d$ consisting of exactly one coset representative in ${\mathbb R}^d/G$. It is well-known that if $G$ is a discrete subgroup, we can easily choose a fundamental domain for $G$. The following lemma first classifies all closed subgroups of $\R^d$. Since we do not find a documented proof, we provide here for completeness.

\begin{lemma} \label{lem:RdSubgroup}
	Let $G$ be a closed subgroup of $\R^{d}$. Then there exists a vector subspace $H$ and a discrete subgroup $\Gamma \subset H^{\perp}$ such that
	
	\begin{equation}\label{H+Gamma}
		G = H \oplus \Gamma.
	\end{equation}
\end{lemma}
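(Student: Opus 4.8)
The plan is to prove the classification of closed subgroups $G\subset\R^d$ by induction on $d$, the base case $d=1$ being the standard fact that a closed subgroup of $\R$ is either $\R$, a cyclic group $\alpha\Z$, or $\{0\}$ --- each of which has the asserted form. For the inductive step, I would first isolate the "continuous part" of $G$: let $H$ be the linear span of a small neighborhood of $0$ in $G$, or more precisely set $V = \bigcup_{t>0} \overline{(G\cap B(0,t))}$-span; the key observation is that $H := \Span\{ v : v \in G,\ \text{the line } \R v \subset G\ \text{or at least a segment } [0,\epsilon]v \subset G\}$ is actually a subgroup contained in $G$, and in fact $H\subset G$ because $G$ is closed and a segment generates the whole line. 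Concretely, $H$ should be defined as the set of $v\in\R^d$ such that $tv\in G$ for all $t$ in some interval around $0$; one checks this is a linear subspace and $H\subseteq G$ by closedness. The cleanest route is: $H$ is the connected component of the identity in $G$ (since $G$ is closed, this is a closed connected subgroup, hence a linear subspace by the classification of connected subgroups of $\R^d$, or by a direct argument).

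Once $H$ is identified as a linear subspace with $H\subseteq G$, I would pass to the quotient. Write $\R^d = H \oplus H^\perp$ orthogonally. Then $G = H \oplus (G\cap H^\perp)$: indeed $G \supseteq H$ forces $G$ to be a union of cosets of $H$, and projecting onto $H^\perp$ identifies $G/H$ with $\Gamma := \pi_{H^\perp}(G) = G\cap H^\perp$, which is a closed subgroup of $H^\perp\cong\R^{d-\dim H}$. The crucial point now is that $\Gamma$ contains no nontrivial segment through $0$ (by maximality of $H$), hence $\Gamma$ is discrete: if $\Gamma$ had $0$ as an accumulation point of itself, a standard compactness/pigeonhole argument would produce points of $\Gamma$ generating an arbitrarily short segment, forcing a line into $\Gamma$ and contradicting the choice of $H$. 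A discrete closed subgroup of $H^\perp$ is then a lattice (of some rank $\le d - \dim H$) by the classical structure theorem for discrete subgroups of Euclidean space, which one may either cite or prove by a short induction choosing a shortest nonzero vector. This yields $G = H\oplus\Gamma$ with $\Gamma\subset H^\perp$ discrete, as claimed.

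The main obstacle --- and the only genuinely delicate step --- is establishing that $\Gamma = G\cap H^\perp$ is discrete, equivalently that $0$ is isolated in $\Gamma$, equivalently that $H$ really is all of the "non-discrete directions." The clean way to run this is: suppose for contradiction there is a sequence $g_n\in G\cap H^\perp$ with $g_n\to 0$, $g_n\neq 0$. Passing to a subsequence, $g_n/|g_n|\to u$ for some unit vector $u\in H^\perp$. Then for any real $t$, choosing integers $k_n$ with $k_n|g_n|\to t$, we get $k_n g_n \to tu$, and since $k_n g_n\in G$ and $G$ is closed, $tu\in G$; so the whole line $\R u\subset G$, forcing $u\in H$, contradicting $u\in H^\perp\setminus\{0\}$. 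I expect the write-up of this compactness argument, together with carefully verifying that $H$ (defined as, say, the identity component of $G$, or as $\{v : [0,\epsilon]v\subseteq G\ \text{some}\ \epsilon>0\}$) is a linear subspace contained in $G$, to be where essentially all the work lies; the rest is bookkeeping with the orthogonal decomposition and an appeal to the standard classification of lattices.
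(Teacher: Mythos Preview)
Your proposal is correct and shares the same high-level architecture as the paper's proof: isolate the ``continuous part'' $H$ of $G$, show $H\subset G$, set $\Gamma = G\cap H^{\perp}$, and argue $\Gamma$ is discrete. The technical execution differs, however. The paper defines $H_\epsilon = \Span_{\R}\{v : v\in B(0,\epsilon)\cap G\}$, observes that these subspaces stabilize at some $H = H_{\epsilon_*}$, and then proves $H\subset G$ by a floor-function approximation: for each $n$ with $1/n<\epsilon_*$ one picks a basis of $H$ inside $B(0,1/n)\cap G$ and approximates an arbitrary $h\in H$ by integer combinations. Discreteness of $\Gamma$ is then almost free: any nonzero element of $G$ within $\epsilon_*$ of $0$ lies in $H$ by definition of $H_{\epsilon_*}$, so cannot lie in $H^\perp$. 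By contrast, your route (taking $H$ to be the identity component, equivalently $\{v:\R v\subset G\}$) makes $H\subset G$ immediate but pushes the work into the discreteness step, where you run the sphere-compactness argument $g_n/|g_n|\to u$ to produce a forbidden line in $H^\perp$. Both arguments are standard; yours is perhaps more Lie-theoretic in flavor, the paper's more hands-on.

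Two minor remarks. First, the induction on $d$ you announce is never actually used---your ``inductive step'' is self-contained and does not invoke the hypothesis for smaller dimension, so you should drop that framing. Second, the lemma only requires $\Gamma$ to be discrete, so the appeal to the structure theorem for lattices at the end is unnecessary.
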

\begin{proof}
	Let $\epsilon > 0$. Consider the vector subspace
	$$
		H_{\epsilon}= \mbox{Span}_{\R} \left\{ {\bf v}: {\bf v} \in B(0, \epsilon) \cap G \right\}.
	$$
Note that $H_{\epsilon}$ form a decreasing sequence of subspaces. There exists $\epsilon_{\ast}$ such that if $\epsilon \leq \epsilon_{\ast}$, then $H_{\epsilon} = H_{\epsilon_{\ast}}$ and we now let $H = H_{\epsilon_{\ast}}$ with dim $H = s$. Define $\Gamma = H^{\perp} \cap G$. We will prove this lemma by establishing the following three claims.
	
	\begin{enumerate}
		\item The subspace $H \subset G$.
		\item $\Gamma$ is a discrete subgroup.
		\item $G = H \oplus \Gamma$. \\
	\end{enumerate}
	
	To see claim (1), Given $n \in \bZp$ such that $1/n < \epsilon_{\ast}$, we let $h \in H = H_{1/n}$. Then we can write $h = \sum_{j=1}^{s} a_{j} {\bf v}_{j}$ where $\{{\bf v}_{1},...,{\bf v}_{s}\}$ is a basis for $H$ with $\|{\bf v}_{i}\|\le 1/n$. Note that as $G$ is an additive subgroup, 
$$	
		\mbox{Span}_{\Z} \left\{ {\bf v}: {\bf v} \in B(0 , 1/n) \cap G \right\} \subset G \cap H
$$	
 where $\mbox{Span}_{\Z}$ means the coefficients of the linear combinations are integers. Define $\widetilde{h}_{n} = \sum_{j = 1}^{s} \lfloor a_{j} \rfloor {\bf v}_{j}$ where $\lfloor x \rfloor$ is the usual floor function that outputs the largest integer smaller than $x$. Then $\widetilde{h_n} \in G$ and
	
	\begin{equation*}
		|h - \widetilde{h}_{n}| \leq \sum_{j=1}^s |a_j-\lfloor a_j\rfloor|\|{\bf v}_j\|\le \frac{s}{n}.
	\end{equation*}
	
	\noindent Taking $n \to \infty$ and noting that $G$ is closed, we have $h \in G$. This shows claim (1). 
	
	To prove claim (2), we note that $\Gamma$ is a subgroup of $\R^{d}$ and we just need to show that it is discrete. Suppose to the contrary that it is not discrete. Then there exists a sequence of $\gamma_{n} \in \Gamma$ such that $\gamma_{n} \to \gamma$, but $\gamma_{n} \neq \gamma$ for all $n \in \bZp$. As $G$ and $H^{\perp}$ are closed, $\gamma \in \Gamma$. But this implies that $\gamma_{n} - \gamma \in H$ for $n$ sufficiently large. As $\gamma_{n} - \gamma \in H^{\perp}$ and $H \cap H^{\perp} = \{ 0 \}$, this forces $\gamma_{n} = \gamma$ for $n$ sufficiently large which is a contradiction. Hence, $\Gamma$ must be discrete. \\
	
	Claim (3) follows directly by noting that any $g \in G$ can be uniquely written as $g = h + h^{\perp}$ where $h \in H$ and $h^{\perp} \in H^{\perp}$. But then $h^{\perp} = g - h \in G$ as $G$ is a group and $H \subset G$, meaning that $h^{\perp} \in \Gamma$. This completes the proof. \qed \par
\end{proof}

\medskip
Because of the above lemma, we have the following proposition justifying the existence of fundamental domain in any closed subgroups of $\R^d$. 

\begin{proposition}
    Let $G$ be a closed subgroup of $\R^d$ with the decomposition in (\ref{H+Gamma}). Then a fundamental domain of $\Gamma$ in  $H^{\perp}$ is a fundamental domain of $G$. 
\end{proposition}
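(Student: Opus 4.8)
The plan is to verify directly that a fundamental domain $F$ of $\Gamma$ in $H^\perp$ contains exactly one coset representative of every coset in $\R^d/G$. By Lemma~\ref{lem:RdSubgroup} we may write $G = H \oplus \Gamma$ with $H$ a subspace and $\Gamma \subset H^\perp$ a discrete subgroup. Every $x \in \R^d$ decomposes uniquely as $x = x_H + x_{H^\perp}$ with $x_H \in H$ and $x_{H^\perp} \in H^\perp$, since $H \oplus H^\perp = \R^d$. The key observation is that translating $x$ by an element of $G = H \oplus \Gamma$ changes the $H$-component by an arbitrary element of $H$ and the $H^\perp$-component by an element of $\Gamma$ — these two operations are independent because $H$ and $\Gamma$ live in complementary subspaces.

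\medskip

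\textbf{Existence.} Given $x \in \R^d$, I would first subtract $x_H \in H \subset G$ to land in $H^\perp$, replacing $x$ by $x_{H^\perp}$. Then, since $F$ is a fundamental domain of $\Gamma$ in $H^\perp$, there is a unique $\gamma \in \Gamma$ with $x_{H^\perp} - \gamma \in F$. Since $x_H + \gamma \in G$, the point $x - (x_H + \gamma) = x_{H^\perp} - \gamma$ lies in $F$, so $F$ meets the coset $x + G$.

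\medskip

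\textbf{Uniqueness.} Suppose $y, y' \in F$ with $y - y' \in G$, say $y - y' = h + \gamma$ with $h \in H$, $\gamma \in \Gamma$. Since $y, y' \in F \subset H^\perp$, the difference $y - y'$ lies in $H^\perp$; but $y - y' = h + \gamma$ with $\gamma \in H^\perp$ forces $h = (y-y') - \gamma \in H \cap H^\perp = \{0\}$, so $y - y' = \gamma \in \Gamma$. Since $F$ is a fundamental domain of $\Gamma$ in $H^\perp$, this forces $\gamma = 0$ and $y = y'$. Measurability of $F$ as a subset of $\R^d$ follows from its measurability as a subset of $H^\perp$ (equivalently, one may always choose $F$ Borel since $\Gamma$ is discrete).

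\medskip

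\textbf{The main (minor) obstacle} is bookkeeping the direct-sum decomposition carefully: one must use both that $H \oplus H^\perp = \R^d$ (to split an arbitrary point and to kill the $H$-component) and that $\Gamma \subseteq H^\perp$ (to ensure the $H$- and $H^\perp$-adjustments do not interfere). There is no analytic difficulty here — it is purely a matter of tracking components — but it is worth stating explicitly that the existence of a Borel fundamental domain for the discrete group $\Gamma$ acting on $H^\perp$ is classical (e.g.\ by well-ordering a fundamental parallelepiped, or by a standard selection argument), which is what makes the proposition a genuine conclusion rather than a tautology.
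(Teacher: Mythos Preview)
Your proof is correct and follows essentially the same approach as the paper, which simply records that it ``follows directly from the fact that $\R^d = H\oplus H^{\perp}$ and $H^{\perp} = \left(H^{\perp}/\Gamma\right)\oplus \Gamma$.'' You have unpacked this one-line remark into the explicit existence/uniqueness verification, which is exactly what those two direct-sum decompositions encode.
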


\begin{proof}
    The proof follows directly from the fact that $\R^d = H\oplus H^{\perp}$ and $H^{\perp} = \left(H^{\perp}/\Gamma\right)\oplus \Gamma$. \qed \par
\end{proof}

\subsection{Dual Group.} The dual group of a closed subgroup $G$ is defined to be 
$$
G^{\perp} = \left\{ x\in \R^d: x\cdot g \in \Z \  \forall g\in G\right\}.
$$
If $G$ is a discrete full-rank subgroup on $\R^d$, then we can write $G = A(\Z^d)$ for some invertible matrix $A$ and it is well-known that $G^{\perp} = (A^{\top})^{-1}(\Z^d)$. In general, if we write $G = H\oplus \Gamma$ where $H$ is a closed non-trivial subspace of $\R^d$ and $\Gamma$ is a discrete lattice on $H^{\perp}$. Then, $G^{\perp}$ is the dual lattice of $\Gamma$ in $H^{\perp}$. Therefore, it is a discrete subset in $\R^d$.  For example, if $G = \R\times \Z$ in $\R^2$. Then 
$$
G = (\R\times\{0\}) \oplus (\{0\}\times \Z), \ \mbox{and} \ G^{\perp} = \{0\}\times \Z.
$$

\subsection{Multi-tiling measures} We fix a fundamental domain $D$ of $G$. Suppose that $\nu$ is a finite Borel measure whose essential support is in $D$. By essential support, we mean a Borel set $X_{\nu}$ such that 
$$
\nu (X_{\nu}) = \nu (\R^d).
$$
We will denote it by $\mbox{supp}(\nu)$. 
Let $D'$ be another fundamental domain of $G$. Then there exists a unique map $\pi: D' \to D$ such that $\pi (x) = x+ g(x)$ where $g(x)$ is the unique element in $G$ such that $\pi(x)\in D$. As $D$ and $D'$ are both fundamental domains, $\pi$ is a bijective map. The following lemma can be done through  a direct calculation, so we will omit the proof.

\begin{lemma}
    Let $\nu$ be a finite Borel measure supported on a fundamental domain $D$ of a closed subgroup $G$. Let $D'$ be another fundamental domain  of $G$ and 
  $$
  \mu(E) = \nu (\pi (E)).
$$
Suppose that $\nu$ is a frame-spectral/ Riesz-spectral  measure with a frame-spectrum/Riesz spectrum $\Lambda\subset G^{\perp}$. Then $\mu$ is also a frame-spectral/ Riesz-spectral  measure with a frame-spectrum/Riesz spectrum $\Lambda\subset G^{\perp}$.
\end{lemma}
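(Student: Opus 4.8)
The plan is to reduce everything to a change of variables identity for integrals against $\nu$ and $\mu$. First I would record the key measure-theoretic fact: since $\pi\colon D'\to D$ is a bijection with $\pi(x)=x+g(x)$, $g(x)\in G$, and $\mu(E)=\nu(\pi(E))$, the pushforward of $\mu$ under $\pi$ is exactly $\nu$; equivalently, for every bounded Borel (or $L^2$) function $h$ on $D$ we have
\begin{equation*}
\int_{D} h(y)\,d\nu(y) = \int_{D'} h(\pi(x))\,d\mu(x).
\end{equation*}
In particular $\mu$ is a finite Borel measure with $\mu(\R^d)=\nu(\R^d)$, and $\supp{\mu}\subset D'$. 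The map $f\mapsto f\circ\pi$ is then a unitary isomorphism $U\colon L^2(\nu)\to L^2(\mu)$, with inverse $g\mapsto g\circ\pi^{-1}$; this is the backbone of the argument.

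Next I would exploit that $\Lambda\subset G^{\perp}$. For $\lambda\in G^{\perp}$ and $x\in D'$, write $\pi(x)=x+g(x)$ with $g(x)\in G$, so $\lambda\cdot g(x)\in\Z$ and hence
\begin{equation*}
e^{2\pi i\lambda\cdot x} = e^{2\pi i\lambda\cdot(\pi(x)-g(x))} = e^{2\pi i\lambda\cdot\pi(x)}\,e^{-2\pi i\lambda\cdot g(x)} = e^{2\pi i\lambda\cdot\pi(x)}.
\end{equation*}
This is the one place the hypothesis $\Lambda\subset G^{\perp}$ is essential. It says precisely that $U$ carries the exponential $e_\lambda$ on $L^2(\nu)$ to the exponential $e_\lambda$ on $L^2(\mu)$: $(U e_\lambda)(x) = e_\lambda(\pi(x)) = e_\lambda(x)$ for $\mu$-a.e.\ $x$. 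Thus $U\big(E(\Lambda)\big)=E(\Lambda)$ as subsets of the respective Hilbert spaces.

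Finally I would invoke the standard fact that a unitary operator preserves the frame property and the Riesz-basis (equivalently, frame plus Riesz sequence) property, together with all associated bounds $A,B$. Concretely: for $f\in L^2(\mu)$, write $f = U\tilde f$ with $\tilde f\in L^2(\nu)$; then $\langle f,e_\lambda\rangle_{L^2(\mu)} = \langle U\tilde f, U e_\lambda\rangle_{L^2(\mu)} = \langle \tilde f, e_\lambda\rangle_{L^2(\nu)}$, and $\|f\|_{L^2(\mu)}^2 = \|\tilde f\|_{L^2(\nu)}^2$, so the two-sided frame inequality for $E(\Lambda)$ in $L^2(\nu)$ transfers verbatim to $L^2(\mu)$; the Riesz-sequence inequality transfers the same way by applying $U$ to finite linear combinations $\sum c_\lambda e_\lambda$. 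Hence $\mu$ is frame-spectral (resp.\ Riesz-spectral) with the same frequency set $\Lambda\subset G^{\perp}$ and the same bounds.

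There is no real obstacle here beyond bookkeeping; the only point that requires care is the measurability of $\pi$ and of $x\mapsto g(x)$, which is why the definition $\mu(E)=\nu(\pi(E))$ is taken as the starting point rather than derived, and the observation that the hypothesis $\Lambda\subset G^{\perp}$ is exactly what makes the exponentials transform correctly under $U$. This is the reason the authors note the proof is "a direct calculation."
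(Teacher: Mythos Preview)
Your proof is correct and is precisely the ``direct calculation'' the paper alludes to (the authors omit the proof entirely): the change-of-variables identity gives a unitary $U\colon L^2(\nu)\to L^2(\mu)$, the hypothesis $\Lambda\subset G^{\perp}$ forces $U e_\lambda = e_\lambda$, and unitaries preserve frame and Riesz-basis properties with the same bounds. There is nothing to add.
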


\medskip



\begin{Def}\label{Def-multi-tiling} {\rm Let $G$ be a closed subgroup. Fix a fundamental domain $D$ and a finite Borel measure $\nu$ essentially supported on $D$. Let $\mu_1,...,\mu_N$ be finite Borel measures such that for each $k = 1,...,N$,  the essential support, denoted by $X_k$, of $\mu_{k}$ is a subset of  a fundamental domain $D_k$ of $G$. Define the injective map  $\pi_{k}: X_k \to D$. Assume that for $i=1,...,N$}
\begin{equation} \label{eqn:EqvlntPbMeasure}
	\mu_1 = \nu \circ \pi_1 , \ .... \ , \mu_N = \nu \circ \pi_N
\end{equation}
{\rm Then the {\bf multi-tiling measure} of $\mu_{1},  \ldots , \mu_{N}$ with respect to $G$ is defined to be}
\begin{equation} \label{eqn:MultiTileMeasure}
	\mu = \mu_{1} + \ldots + \mu_{N} 
\end{equation}
{\rm  where $\mu(X_j\cap X_k) = 0$ for all $k \neq j$ (a measure-disjoint condition). }
\end{Def}

We refer $\mu$ as a multi-tiling measure because if $\nu$ is the Lebesgue measure supported on the fundamental domain, $\mu$ is the Lebesgue measure supported on a multi-tile (see Example \ref{eg:R2UnitSqTiling}). In general, the measure $\nu$ and $\mu_i$ do not really tile the space. However, each $\mu_i$ are supported inside a fundamental domain and in most cases, it has properties that behave like translational tiles. For example, a spectral set tiles   ``weakly" (see \cite{LM2022} for details). Certain singular spectral measures can ``tile" the fundamental domain in the sense that one can find another measure $\nu$  that satisfies $\mu\ast\nu = \chi_{[0,1]^d}dx$, the Lebesgue measure on $[0,1]^d$ \cite{GL}. A precise generalized Fuglede's conjecture for singular measures was also formulated in the same paper \cite{GL}.  In view of this, $\mu$ will be the ``multi-tiling version" of $\nu$. Therefore, we choose the name multi-tiling measures to describe $\mu$.  

\begin{Example} \label{eg:R2UnitSqTiling}
	{\rm Let $\nu$ be the Lebesgue measure on the unit $d$-dimensional cube, $[0 \mc 1]^{d}$, which is a fundamental domain of $\Z^{d}$, and let $X_{1} \mc \ldots \mc X_{k}$ be disassembled copies of the fundamental domain so that their intersections have Lebesgue measure zero. Define $\mu_{i}$ to be the Lebesgue measure on $X_{i}$. Then the multi-tiling measure $\mu$ defined in \eqref{eqn:MultiTileMeasure} is the Lebesgue measure supported on the multi-tile $X_{1} \cup \cdots \cup X_{k}$. This is exactly the mult-tile of level $k$ considered by Kolountzakis in \cite{K15}.}
\end{Example}

\begin{Example} \label{eg:CantorMid4th}
	{\rm Consider the middle-fourth Cantor measure $\nu$ studied by Jorgensen and Pedersen in \cite{JP98}. They showed that $\nu$ has a Fourier orthonormal basis whose spectrum is $\Lambda = \{ \sum_{k = 0}^{N - 1} 4^{j} d_{k} : d_{k} \in \{ 0, 1 \} \text{ and } N \in \bZp \}$. Let $G = \Z$ and $D = [0, 1)$. We let $D_1,...,D_k$ be bounded fundamental domains of $G$ and they are mutually disjoint and denote $\pi_i$ by its corresponding projection maps.  We can  form a multi-tiling measure $\mu = \mu_1+...+\mu_k$ where $\mu_i$ are built out of $\nu$ such that $\nu = \mu_{i} \circ \pi_{i}$ for each $i=1,...,k$.} 
\end{Example}

For a given $x \in \mbox{supp}{(\nu)} \subset D$, there exists a unique $g_{j}(x)$ such that $x + g_{j}(x) \in \mbox{supp}{(\mu_{j})}$. We collect all these $g_{j}(x)$ in
\begin{equation} \label{eqn:PbMapElementCollection}
	{\mathcal G}_{x} = \{ g_{1}(x) \ldots  g_{N}(x) \}. 
\end{equation}
Define ${\mathfrak D}_{x}: \R^{N d} \to {{\mathbb C}}$ by
\begin{equation} \label{eqn:DeterminantFunction}
	{\mathfrak D}_{x} ({\bf t}) = \det(M_{x}({\bf t})) 
 \end{equation}
 where ${\bf t} = (t_{1} , \ldots , t_{N})$ and  $M_{x}({\bf t}) $ is an $N\times N$ matrix whose entries are
 $$
 (M_{x}({\bf t}))_{j, k} = e^{2\pi i  t_{k}\cdot g_{j}(x)}. 
 $$
 We can now state the main theorem, which generalize some of  the results appeared in \cite{K15,AAC,CC,CKM}. 

\begin{theorem} \label{thm:MainResult}
	Suppose that  $\mu$ is a multi-tiling measure with respect to a closed subgroup $G$ given in Definition \ref{Def-multi-tiling}. Then the following are equivalent:
	
	\begin{enumerate}
		\item There exist ${\bf t} \in \R^{N d}$ such that ${\bf t} = (t_{1},  \ldots,  t_{N})$ with each $t_{k} \in \R^{d}$ for which $|{\mathfrak D}_{x}({\bf t})| \geq \epsilon_{0} > 0$ for $\nu$- a.e. $x$.
		
		\item If $\nu$ is frame spectral with a spectrum $\Lambda$ in $G^{\perp}$, then $\mu$ is frame spectral with frame  spectrum $\bigcup_{k = 1}^{N} (\Lambda + t_{k})$.
		
		\item If $\nu$ is a Riesz basis with spectrum $\Lambda$ in $G^{\perp}$, then $\mu$ is a Riesz spectral with a Riesz spectrum $\bigcup_{k = 1}^{N} (\Lambda + t_{k})$.
	\end{enumerate}
\end{theorem}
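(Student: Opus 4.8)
The plan is to realize $L^2(\mu)$ as an $N$-fold fibered copy of $L^2(\nu)$ and to identify the analysis operator of $E\bigl(\bigcup_{k=1}^N(\Lambda+t_k)\bigr)$ on $L^2(\mu)$ with that of $E(\Lambda)$ on $L^2(\nu)$, twisted fiberwise by the matrix $M_x({\bf t})$. Because the supports $X_1,\dots,X_N$ are pairwise $\mu$-null and each $\mu_j=\nu\circ\pi_j$ with $\pi_j$ pushing $\mu_j$ forward to $\nu$, the assignment $f\mapsto(\widetilde{f}_1,\dots,\widetilde{f}_N)$, $\widetilde{f}_j(y):=f\bigl(y+g_j(y)\bigr)$, is an isometric isomorphism of $L^2(\mu)$ onto $L^2(\nu)^{\oplus N}$ (surjectivity follows by prescribing the values of $f$ on each $X_j$ independently). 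I would then expand, for $\lambda\in\Lambda\subset G^\perp$, the $L^2(\mu)$-inner product of $f$ with $e^{2\pi i(\lambda+t_k)\cdot x}$ by a change of variables on each piece $X_j$; since $\lambda\cdot g_j(y)\in\Z$ the factor $e^{-2\pi i\lambda\cdot g_j(y)}$ equals $1$, and writing ${\bf v}(y):=(\widetilde{f}_1(y),\dots,\widetilde{f}_N(y))$ one obtains
$$
\bigl\langle f,\, e^{2\pi i(\lambda+t_k)\cdot x}\bigr\rangle_{L^2(\mu)}=\bigl\langle G_k,\, e^{2\pi i\lambda\cdot y}\bigr\rangle_{L^2(\nu)},\qquad G_k(y):=e^{-2\pi i t_k\cdot y}\bigl(M_y({\bf t})^{*}{\bf v}(y)\bigr)_k.
$$
Hence the analysis operator of $E\bigl(\bigcup_k(\Lambda+t_k)\bigr)$ on $L^2(\mu)$ factors as $T_\nu^{\oplus N}\circ\Phi$, where $T_\nu$ is the analysis operator of $E(\Lambda)$ on $L^2(\nu)$, $T_\nu^{\oplus N}:=T_\nu\oplus\cdots\oplus T_\nu$ ($N$ copies), and $\Phi\colon L^2(\mu)\to L^2(\nu)^{\oplus N}$ is the isometry above followed by the fiberwise multiplication ${\bf v}(\cdot)\mapsto(G_1,\dots,G_N)$.

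This factorization does the work. The fiberwise multiplication is always bounded, with norm $\le N$ because $M_y({\bf t})$ has unimodular entries, and it is bounded below exactly when $\sigma_{\min}(M_y({\bf t}))$ is essentially bounded away from $0$; since $\sigma_{\min}(M_y({\bf t}))^N\le|{\mathfrak D}_y({\bf t})|\le N^{N-1}\sigma_{\min}(M_y({\bf t}))$, the latter is precisely condition (1). For $(1)\Rightarrow(2)$: if $E(\Lambda)$ is a frame for $L^2(\nu)$ then $T_\nu$, hence $T_\nu^{\oplus N}$, is bounded and bounded below, and under (1) the map $\Phi$ is a linear isomorphism, so $T_\nu^{\oplus N}\circ\Phi$ is bounded and bounded below --- that is, $E\bigl(\bigcup_k(\Lambda+t_k)\bigr)$ is a frame for $L^2(\mu)$. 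For $(1)\Rightarrow(3)$, a Riesz basis $E(\Lambda)$ for $L^2(\nu)$ means $T_\nu$ is a bounded bijection onto $\ell^2(\Lambda)$ (see e.g.\ \cite{Chr}), so $T_\nu^{\oplus N}\circ\Phi$ is a bounded bijection onto $\ell^2$, i.e.\ $E\bigl(\bigcup_k(\Lambda+t_k)\bigr)$ is a Riesz basis for $L^2(\mu)$. One also notes that $\det M_x({\bf t})\neq0$ forces the frequencies $\lambda+t_k$ to be pairwise distinct (else two columns of $M_x({\bf t})$ would coincide), so $\bigcup_k(\Lambda+t_k)$ is a genuine index set.

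For the converses $(2)\Rightarrow(1)$ and $(3)\Rightarrow(1)$ I would invoke the standing hypothesis that $\nu$ is itself frame- (respectively Riesz-) spectral with a spectrum $\Lambda\subset G^\perp$ (otherwise (2), (3) carry no content). Since a Riesz basis is in particular a frame, in both cases $T_\nu^{\oplus N}\circ\Phi$ is bounded below, say by $\sqrt A$, while $T_\nu$ is bounded above, say by $\sqrt B$; comparing these, and using that the isometric part of $\Phi$ is onto, one gets for every ${\bf v}\in L^2(\nu;{\mathbb C}^N)$
$$
\int\bigl\|M_y({\bf t})^{*}{\bf v}(y)\bigr\|^2\,d\nu(y)\ \ge\ \frac{A}{B}\int\bigl\|{\bf v}(y)\bigr\|^2\,d\nu(y).
$$
Specializing to ${\bf v}(y)={\bf 1}_E(y)\,u$ for a fixed unit vector $u\in{\mathbb C}^N$ and an arbitrary measurable set $E$ forces $\|M_y({\bf t})^{*}u\|^2\ge A/B$ for $\nu$-a.e.\ $y$; letting $u$ run over a countable dense subset of the unit sphere of ${\mathbb C}^N$ and using that $u\mapsto\|M_y({\bf t})^{*}u\|$ is Lipschitz with constant $\le N$ uniformly in $y$, we obtain $\sigma_{\min}(M_y({\bf t}))^2\ge A/B$ for $\nu$-a.e.\ $y$, hence $|{\mathfrak D}_y({\bf t})|\ge(A/B)^{N/2}>0$, which is condition (1).

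The routine but slightly delicate points I would attend to are: the Borel measurability of $y\mapsto g_j(y)$, $y\mapsto M_y({\bf t})$ and $y\mapsto\sigma_{\min}(M_y({\bf t}))$, so that the fiberwise operators and their inverses make sense and the localization step is legitimate; the bookkeeping in the change of variables, in particular that $\pi_j$ pushes $\mu_j$ onto $\nu$ and that $\lambda\cdot g_j(y)\in\Z$ for $\lambda\in G^\perp$; and the precise fact that $E(\Lambda)$ is a Riesz basis for $L^2(\nu)$ if and only if its analysis operator is a bounded bijection onto $\ell^2(\Lambda)$. The one genuinely substantive step --- the heart of the converse direction --- is the passage from the single integrated inequality to the pointwise a.e.\ lower bound on $\sigma_{\min}(M_y({\bf t}))$; the countable-dense-set localization sketched above is exactly what makes this work without having to choose singular vectors measurably.
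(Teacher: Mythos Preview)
Your proof is correct and follows essentially the same route as the paper: the change of variables on each $X_j$ together with $\lambda\cdot g_j(y)\in\Z$ reduces the frame/Riesz inequalities for $\mu$ to the fiberwise bound $\|M_y({\bf t})^{*}{\bf v}\|\asymp\|{\bf v}\|$ (the paper writes $M_x({\bf t})$ rather than its adjoint, a harmless slip), and the converse is obtained by localizing to Borel sets $E$. Your operator-factorization language $T_\nu^{\oplus N}\circ\Phi$ is a cleaner packaging of the paper's direct computation, and your countable-dense-subset argument in $(2)\Rightarrow(1)$ is in fact more careful than the paper's, which passes from the integrated inequality to the pointwise determinant bound without explaining why the exceptional null set can be chosen independently of the test vector.
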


The proof of this theorem will be based on verifying the frame and Riesz sequence inequalities under the multi-tiling measure assumption. It has a slightly different perspective from the previous proofs in \cite{K15,AAC}. It will be given in the next section.

\section{Proof of Theorem \ref{thm:MainResult}}

Let us first recall the Hadamard's inequality for determinant of square matrices which states that if $A$ is a square matrix whose column vectors are ${\bf v}_1,...,{\bf v}_N$, then 
$$
|\det (A)|\le \prod_{i=1}^N \|{\bf v}_i\|.
$$
In particular, if $A$ has only unimodular entries, then $|\det (A)|\le N^{N/2}$.

\begin{lemma}
    There exists $\epsilon_0>0$ such that $|{\mathfrak D}_{x}({\bf t})| \geq \epsilon_{0} > 0$ for $\nu$ a.e. $x$ if and only if there exists $c_{0} > 0$ such that  
	\begin{equation} \label{eqn:DetEquivCondition}
		\|M_{x} ({\bf t}) {\bf v}\|^{2} \geq c_{0}^{2} \|{{\bf v}}\|^{2} \ \mbox{for} \ \nu \ \mbox{a.e.} \ x.
	\end{equation}
\end{lemma}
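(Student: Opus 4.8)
The plan is to read this as a pointwise fact in linear algebra about the single matrix $A := M_x({\bf t})$, the point being that the relevant estimates are uniform in $x$ simply because $A$ always has unimodular entries. Two preliminary observations drive everything. First, writing $A^{\ast}$ for the conjugate transpose, condition \eqref{eqn:DetEquivCondition} says precisely that $\langle A^{\ast}A{\bf v},{\bf v}\rangle = \|A{\bf v}\|^{2}\ge c_{0}^{2}\|{\bf v}\|^{2}$ for all ${\bf v}$, i.e. that every eigenvalue of the positive semidefinite matrix $A^{\ast}A$ is $\ge c_{0}^{2}$; meanwhile $|{\mathfrak D}_{x}({\bf t})|^{2}=|\det A|^{2}=\det(A^{\ast}A)=\prod_{j=1}^{N}\lambda_{j}$, where $\lambda_{1}\ge\cdots\ge\lambda_{N}\ge 0$ are the eigenvalues of $A^{\ast}A$. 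Second, since every column of $A$ has norm $\sqrt{N}$, Hadamard's inequality (equivalently the Frobenius-norm bound) gives $\|A\|\le N$, hence $\lambda_{j}\le N^{2}$ for every $j$ and \emph{every} $x$.

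Granting these, the equivalence is immediate in both directions. If \eqref{eqn:DetEquivCondition} holds, then all $\lambda_{j}\ge c_{0}^{2}$, so $|{\mathfrak D}_{x}({\bf t})|^{2}=\prod_{j}\lambda_{j}\ge c_{0}^{2N}$, and we may take $\epsilon_{0}=c_{0}^{N}$. Conversely, if $|{\mathfrak D}_{x}({\bf t})|\ge\epsilon_{0}$, then using the uniform upper bound on the top $N-1$ eigenvalues,
$$
\epsilon_{0}^{2}\le\det(A^{\ast}A)=\prod_{j=1}^{N}\lambda_{j}=\lambda_{N}\prod_{j=1}^{N-1}\lambda_{j}\le\lambda_{N}\cdot N^{2(N-1)},
$$
so the smallest eigenvalue satisfies $\lambda_{N}\ge\epsilon_{0}^{2}/N^{2(N-1)}$, which is \eqref{eqn:DetEquivCondition} with $c_{0}=\epsilon_{0}/N^{N-1}$. (If one prefers to avoid eigenvalues and stay with the Hadamard inequality recalled above: given a unit vector ${\bf v}$, complete it to an orthonormal basis ${\bf v},{\bf w}_{2},\ldots,{\bf w}_{N}$; since the change-of-basis matrix is unitary, $|{\mathfrak D}_{x}({\bf t})|=\bigl|\det[A{\bf v}\mid A{\bf w}_{2}\mid\cdots\mid A{\bf w}_{N}]\bigr|\le\|A{\bf v}\|\prod_{j\ge 2}\|A{\bf w}_{j}\|\le\|A{\bf v}\|\,N^{N-1}$, and comparing with $\epsilon_{0}$ gives $\|A{\bf v}\|\ge\epsilon_{0}/N^{N-1}$.)

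Since the constant $N$ and the exponent $N-1$ are absolute, both implications pass verbatim to the "$\nu$-a.e.\ $x$" statements, so no measure-theoretic care beyond this is needed. I do not expect a genuine obstacle here; the only thing to be careful about is the bookkeeping of the "for all ${\bf v}$" quantifier in \eqref{eqn:DetEquivCondition} and its translation into the smallest-eigenvalue (equivalently smallest-singular-value) formulation of $A^{\ast}A$, together with making explicit that unimodular entries force the operator-norm bound $\|A\|\le N$.
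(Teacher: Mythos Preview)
Your argument is correct and follows essentially the same approach as the paper: both directions reduce to comparing the smallest eigenvalue of $A^{\ast}A$ with the product of all eigenvalues, using the uniform bound on $\|A\|$ that comes from the unimodular entries. Your write-up is in fact slightly cleaner---you keep track of the square in $|\det A|^{2}=\det(A^{\ast}A)$ (the paper drops it), and your alternative Hadamard argument on an orthonormal completion of ${\bf v}$ is a nice self-contained variant not present in the paper.
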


\begin{proof}
Suppose that  we have (\ref{eqn:DetEquivCondition}). For those $x$ such that (\ref{eqn:DetEquivCondition}) holds,   if $\lambda$ is an eigenvalue of $M_x({\bf t})$, we can plug the eigenvector into (\ref{eqn:DetEquivCondition}). We have immediately that $|\lambda|\ge c_0$. As determinant is a product of eigenvalues, $|{\mathfrak D}_{x}({\bf t})| \geq \epsilon_{0}: = c_0^N> 0$.

\medskip

Conversely, we suppose the determinant condition holds. From the Hadamard inequality and $M_x({\bf t})$ has only unimodular entries, we know that there exists $B>0$ independent of $x$ such that $\|M_{x} ({\bf t}) {\bf v}\|^{2} \leq B \|{{\bf v}}\|^{2}$. Let $0<\sigma_1(x)\le\sigma_2(x) \le ...\le \sigma_N(x)$ be the eigenvalues of $M_{x} ({\bf t})^{\ast}M_{x} ({\bf t})$. We have 
$$
\epsilon_0\le |{\mathfrak D}_x({\bf t})|  = \det (M_{x} ({\bf t})^{\ast}M_{x} ({\bf t}))= \sigma_1(x)...\sigma_N(x)\le B^{N-1}\sigma_1(x).
$$
This shows that $\sigma_1(x)\ge B^{-(N-1)}\epsilon_0$. This shows that (\ref{eqn:DetEquivCondition}) holds, completing the proof. \quad$\Box$
\end{proof}

\medskip

\noindent{\it Proof of Theorem \ref{thm:MainResult}.} For simplicity of notation, we write $e_{\lambda}(x)  = e^{2\pi i \lambda\cdot x}$ in this proof. 

\medskip

	\noindent (1 $\implies$ 2) \indent Consider $\cup_{k = 1}^{N} ( \Lambda + t_{k})$, we have
		{\small
			\begin{align*}
				\sum_{k = 1}^{N} \sum_{\lambda \in \Lambda} \abs*{\int f(x) e_{- (\lambda + t_{k})}(x) \ d\mu(x)}^{2} & = \sum_{k = 1}^{N} \sum_{\lambda \in \Lambda} \abs*{\sum_{j = 1}^{N} \int f(x) e_{- (\lambda + t_{k})}(x) \ d\mu_{j}(x)}^{2} \\
				& = \sum_{k = 1}^{N} \sum_{\lambda \in \Lambda} \abs*{\sum_{j = 1}^{N} \int f(\pi_{j}^{-1}(x)) e_{- (\lambda + t_{k})}(\pi_{j}^{-1}(x)) \ d\nu(x)}^{2}
			\end{align*}
		}
	Moreover, $\pi_{j}^{-1}(x) = x + g_{j}(x)$, where $x \in \operatorname{supp}(\nu)$ and $g_{j}(x) \in G$, so $(\lambda + t_{k}) \cdot \pi_{j}^{-1}(x) = (\lambda + t_{k}) \cdot (x + g_{j}(x)) = (\lambda + t_{k}) \cdot x + t_{k} \cdot g_{j}(x) + \lambda \cdot g_{j}(x)$, where the last term is in $\Z$ since $\lambda\in G^{\perp}$. Continuing from above, we then get
		
		{\footnotesize
			\begin{align*}
				\sum_{k = 1}^{N} \sum_{\lambda \in \Lambda} \abs*{\int f(x) e_{- (\lambda + t_{k})}(x) \ d\mu(x)}^{2} & = \sum_{k = 1}^{N} \sum_{\lambda \in \Lambda} \abs*{\sum_{j = 1}^{N} \int f(\pi_{j}^{-1}((x)) e_{- (\lambda + t_{k})}(x) e_{- t_{k}}(g_{j}(x)) \ d\nu(x)}^{2} \nonumber \\
				& = \sum_{k = 1}^{N} \sum_{\lambda \in \Lambda} \abs*{\int \left[ \sum_{j = 1}^{N} f(\pi_{j}^{-1}(x)) e_{- t_{k}}(g_{j}(x)) e_{- t_{k}(x)} \right] e_{- \lambda}(x) \ d\nu(x)}^{2} \nonumber \\
				& \asymp \sum_{k = 1}^{N} \int \abs*{ \sum_{j = 1}^{N} f(\pi_{j}^{-1}(x)) e_{- t_{k}}(g_{j}(x)) e_{- t_{k}}(x)}^{2} \ d\nu(x) \nonumber
			\end{align*}
		}
		 where the frame inequality was used in the last line. Now noting that $e_{- t_{k}}(x)$ has no dependence on the index $j$ and has unit modulus and that ${\bf F}(x) = (f(\pi_{1}^{-1}(x) \mc \ldots \mc f(\pi_{N}^{-1}(x))^{\top}$,
		
		\begin{align}			
			\sum_{k = 1}^{N} \sum_{\lambda \in \Lambda} \abs*{\int f(x) e_{- (\lambda + t_{k})}(x) \ d\mu(x)}^{2} & = \sum_{k = 1}^{N} \int \abs*{ \sum_{j = 1}^{N} f(\pi_{j}^{-1}(x)) e_{- t_{k}}(g_{j}(x))}^{2} \ d\nu(x) \nonumber \\
			& = \int \norm*{ M_{x}({\bf t}) {\bf F}(x)}^{2} \ d\nu(x) \nonumber \\
			& \asymp \int \norm*{{\bf F(x)}^{2}} \ d\nu(x) \label{eqn:DesiredFrameCondition} \mc
		\end{align}
		 By the hypothesis, the determinant condition given by (a), we have the desired frame inequality which follows from \eqref{eqn:DetEquivCondition} ensuring the uniform bound.
		
\noindent (1 $\implies$ 3) Recall that the pullback map is injective. Again, we will go through a computation showing the bound condition of a Riesz sequence as an asymptotic containment. Now consider $\cup_{k = 1}^{N} ( \Lambda + t_{k})$, and assume that $L^{2}(\nu)$ is a Riesz sequence with spectrum in $\Lambda$, giving
		
		\begin{align*}
			\norm*{\sum_{k = 1}^{N} \sum_{\lambda \in \Lambda} c_{\lambda, k} e_{\lambda + t_{k}}(x)}_{L^{2}(\mu)}^{2} & = \sum_{j = 1}^{N} \int \abs*{ \sum_{k = 1}^{N} \sum_{\lambda \in \Lambda} c_{\lambda, k} e_{\lambda + t_{k}}(x) }^{2} \ d\mu_{j}(x) \\
			& = \sum_{j = 1}^{N} \int \abs*{ \sum_{k = 1}^{N} \sum_{\lambda \in \Lambda} c_{\lambda, k} e_{\lambda + t_{k}}(x + g_{j}(x)) }^{2} \ d\nu(x)
		\end{align*}
		where again the substitution $y = \pi_{j}(x)$ is handled similarly as before going to the second line above. Expanding the argument of the exponential as before leads to
		
		\begin{align*}
			\norm*{\sum_{k = 1}^{N} \sum_{\lambda \in \Lambda} c_{\lambda, k} e_{\lambda + t_{k}}(x)}_{L^{2}(\mu)}^{2} & = \sum_{j = 1}^{N} \int \abs*{ \sum_{k = 1}^{N} \left( \sum_{\lambda \in \Lambda} c_{\lambda, k} e_{\lambda + t_{k}}(x) \right) e_{t_{k}}(g_{j}(x)) }^{2} \ d\nu(x) \\
			& = \int \sum_{j = 1}^{N} \abs*{ \sum_{k = 1}^{N} \left( \sum_{\lambda \in \Lambda} c_{\lambda, k} e_{\lambda + t_{k}}(x) \right) e_{t_{k}}(g_{j}(x)) }^{2} \ d\nu(x)\\
    & = \int \|M_x ({\bf t}) {\bf x}\|^2d\nu (x)
		\end{align*}
		where ${\bf x} = \left(\sum_{\lambda \in \Lambda} c_{\lambda, 1} e_{\lambda + t_{1}}(x),....,\sum_{\lambda \in \Lambda} c_{\lambda, N} e_{\lambda + t_{N}}(x) \right)^{\top}$. The determinant condition ensures that the lower and upper bounds are respectively bounded away from $0$ and $\infty$ giving the  bounding
		
		\begin{align*}
			\norm*{\sum_{k = 1}^{N} \sum_{\lambda \in \Lambda} c_{\lambda, k} e_{\lambda + t_{k}}(x)}_{L^{2}(\mu)}^{2} & \asymp \int \sum_{k = 1}^{N} \abs*{ \sum_{\lambda \in \Lambda} c_{\lambda, k} e_{\lambda + t_{k}}(x) }^{2} \ d\nu(x) \\
			& = \int \sum_{k = 1}^{N} \abs*{ \sum_{\lambda \in \Lambda} c_{\lambda, k} e_{\lambda}(x) }^{2} \ d\nu(x)
		\end{align*}
		which follows since going to the second line above $t_{k}$ is independent of $\Lambda$. We are finally left with
		
		\begin{align*}
			\norm*{\sum_{k = 1}^{N} \sum_{\lambda \in \Lambda} c_{\lambda, k} e_{\lambda + t_{k}}(x)}_{L^{2}(\mu)}^{2} & \asymp \sum_{k = 1}^{N} \sum_{\lambda \in \Lambda} \abs*{c_{\lambda, k}}^{2} \text{,}
		\end{align*}
		 since $\Lambda$ is the spectrum for the Riesz sequence for $\nu$. By part 2 above, we know that $\mu$ is frame spectral with the shifted spectrum found here, which makes this Riesze sequence a spanning set, hence it is in fact a Riesz basis.

  \medskip
\noindent (3 $\implies$ 2) This direction is trivial since a Riesz basis with a given spectrum is already a frame with the same spectrum by definition.

  \medskip
\noindent (2 $\implies$ 1) Starting with \eqref{eqn:DesiredFrameCondition}, fix a Borel set $E \subset D$ in the $\supp{\nu}$. Let $E_{j} = \pi_{j}^{-1}(E)$, and note that they are (almost) disjoint subsets in $\supp{\mu}$. Then for all $f \in L^{2}(\mu)$, we can define
		
		\begin{equation*}
			\tilde{f} = f \cdot {\bf 1}_{\bigcup_{j = 1}^{N} \pi_{j}^{-1}(E)} = \sum_{j = 1}^{N} f \cdot {\bf 1}_{\pi_{j}^{-1}(E)} \md
		\end{equation*}
		
		\indent It then follows that $\tilde{f}(\pi_{k}^{-1}(x)) = f(\pi_{k}^{-1}x) \cdot {\bf 1}_{E}(x)$. Let ${\bf \tilde{F}}(x) = (\tilde{f}(\pi_{1}^{-1}(x) \mc \ldots \mc \tilde{f}(\pi_{N}^{-1}(x))^{\top}$ and substituting it into \eqref{eqn:DesiredFrameCondition}, and integrating, we have
		
		\begin{equation*}
			\int \left( \norm*{M_{x} ({\bf t})} {\bf \tilde{F}}^{2} - A \norm*{{\bf \tilde{F}}}^{2} \right) \ d\nu(x) \geq 0 \md
		\end{equation*}
		By the properties of the indicator function, the equation above becomes
		
		\begin{equation} \label{eqn:DesiredDetCondition}
			\int_{E} \left( \norm*{M_{x} ({\bf t}){\bf F}(x)}^{2} - A \norm*{{\bf F}}^{2} \right) \ d\nu(x) \geq 0 \md
		\end{equation}
		 Because the above result holds for all Borel subsets of $D$, then as is well known in analysis the integrand itself is nonnegative, which gives the desire determinant condition as it is equivalent by \eqref{eqn:DetEquivCondition}. \qed \\

\section{The determinant condition.}

Our goal is to establish some necessary and sufficient condition in order for $\mathfrak{D}_{x}({\bf t})$ in \eqref{eqn:DeterminantFunction} to be uniformly bounded away from $0$, particularly when $G$ is only a closed subgroup.  When $G$ is a discrete group, the condition is trivial if the multi-tiling measure is a bounded measure since there are only finitely many such determinants, so the condition holds for almost every ${\bf t}$ \cite{K15}. There has also been an intensive study when the support of the multi-tiling measure $\mu$ is an unbounded set \cite{CC,CKM}. As now $G$ is only a closed subgroup, two elements of $G$ may be as close  to each other as we wanted even the measure is bounded. This means that the determinant condition cannot be trivially satisfied like the discrete group case.   

\begin{lemma} \label{lem:Hada}
	Let $A$ be an $N \times N$ matrix of complex entries whose entries are all unimodular. Let ${\bf v}$ and ${\bf w}$ be two column vectors in $A$. Then 
	\begin{equation*}
		\abs{\det (A)} \leq N^{\frac{N - 1}{2}} \|{\bf v} - {\bf w}\|. 
	\end{equation*}
\end{lemma}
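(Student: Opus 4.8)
The plan is to reduce Lemma \ref{lem:Hada} to the ordinary Hadamard inequality by a column operation. Suppose $\mathbf v$ is the $p$-th column and $\mathbf w$ is the $q$-th column of $A$ (with $p \neq q$). Since the determinant is unchanged if we replace column $q$ by (column $q$) $-$ (column $p$), I would form the matrix $\widetilde A$ whose columns are those of $A$ except that its $q$-th column is $\mathbf w - \mathbf v$. Then $\det(\widetilde A) = \det(A)$.

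Next I apply the classical Hadamard inequality (recalled at the start of Section 3) to $\widetilde A$:
\begin{equation*}
	\abs{\det(A)} = \abs{\det(\widetilde A)} \leq \left( \prod_{\substack{i=1 \\ i \neq q}}^{N} \|\text{col}_i(\widetilde A)\| \right) \cdot \|\mathbf w - \mathbf v\|.
\end{equation*}
For each $i \neq q$ the $i$-th column of $\widetilde A$ is an original column of $A$, which has all unimodular entries, hence norm $\sqrt{N}$. There are $N-1$ such columns, so the product of those norms is $N^{(N-1)/2}$, yielding
\begin{equation*}
	\abs{\det(A)} \leq N^{\frac{N-1}{2}} \, \|\mathbf v - \mathbf w\|,
\end{equation*}
which is the claim (note $\|\mathbf w - \mathbf v\| = \|\mathbf v - \mathbf w\|$).

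There is essentially no obstacle here; the only point requiring a word of care is the degenerate case $N = 1$, where the statement reads $\abs{\det(A)} \le \|\mathbf v - \mathbf w\|$ with $\mathbf v = \mathbf w$ the unique column, forcing both sides to behave consistently only if one interprets it as $\abs{a_{11}} \le 0$ — so I would implicitly assume $N \geq 2$ (which is the only case of interest, since $\mathbf v$ and $\mathbf w$ are meant to be two genuinely different columns). Assuming $N \geq 2$, the argument above is complete, and it is exactly the kind of estimate needed later to control $\mathfrak D_x(\mathbf t)$ when two elements of the closed subgroup $G$ are close together: if $g_j(x)$ and $g_{j'}(x)$ nearly coincide then two columns of $M_x(\mathbf t)$ are nearly equal and the determinant is forced to be small.
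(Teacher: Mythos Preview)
Your proof is correct and follows essentially the same approach as the paper: a column operation replaces one column by the difference $\mathbf v - \mathbf w$, then Hadamard's inequality together with the fact that the remaining $N-1$ unimodular columns each have norm $\sqrt{N}$ gives the bound. The paper simply assumes without loss of generality that $\mathbf v$ and $\mathbf w$ are the first two columns, but otherwise the arguments are identical.
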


\begin{proof}
	Without loss of generality, we can assume ${\bf v}$ and ${\bf w}$ are in the first and the second column. Viewing the determinant as a function of the columns,
	
	\begin{equation*}
		\det({\bf v}, {\bf w} \mc {\bf v}_{3} \mc \ldots \mc {\bf v}_{N}) = \det({\bf v} - {\bf w} \mc {\bf w} \mc \ldots) \md
	\end{equation*}
	
	\noindent By the Hadamard's inequality,
	\begin{equation*}
		\abs*{\det(A)} \leq \|{\bf v} - {\bf w}\| \|{\bf w}\| \|{\bf v}_{3}\| \cdots \|{\bf v}_{N}\| \md 
	\end{equation*}
	\noindent But $A$ has unimodular entries, so all column vector norms are at most $N$. Hence, the desired inequality follows. \qed \\
\end{proof}

With the above lemma in hand, we can now state and prove the desired necessary condition mentioned earlier. \\

\begin{theorem} \label{thm:NecessaryCondition}
	Let $N \in {\bZp}$ and $\nu$ be a finite Borel measure.  Suppose that there exists $\epsilon_{0} > 0$ and $t_{1} \mc, \ldots \mc t_{N} \in \R^{d}$ such that
	
	\begin{equation*}
		\abs*{\mathfrak{D}_{x}({\bf t})} \geq \epsilon_{0} > 0	
	\end{equation*}

	\noindent for $\nu$ a.e. $x$. Then there exists $m>0$ such that $\min_{j \neq k} \abs*{g_{j}(x) - g_{k}(x)} \geq m$ for $\nu$ a.e. $x$.
\end{theorem}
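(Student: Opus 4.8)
The idea is to convert the determinant lower bound into a separation bound for the $g_j(x)$ by feeding Lemma \ref{lem:Hada} the \emph{rows} of $M_x(\mathbf{t})$, since two nearly equal rows force two nearly equal values $g_j(x)$, $g_k(x)$.

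First I would record the elementary Lipschitz-type estimate comparing two rows of $M_x(\mathbf{t})$. Fix $x$ with $g_1(x),\ldots,g_N(x)$ as in \eqref{eqn:PbMapElementCollection}, and fix $j\ne k$. For each $\ell$, factoring out the unimodular scalar $e^{2\pi i t_\ell\cdot g_k(x)}$ and using $|e^{2\pi i s}-1|\le 2\pi|s|$ together with Cauchy--Schwarz gives
\[
\bigl| (M_x(\mathbf{t}))_{j,\ell} - (M_x(\mathbf{t}))_{k,\ell} \bigr| = \bigl| e^{2\pi i t_\ell\cdot(g_j(x)-g_k(x))} - 1 \bigr| \le 2\pi\,\|t_\ell\|\,\|g_j(x)-g_k(x)\|,
\]
so that, writing $\mathrm{row}_j$ for the $j$-th row of $M_x(\mathbf{t})$,
\[
\|\mathrm{row}_j - \mathrm{row}_k\| \le 2\pi\Bigl(\sum_{\ell=1}^N\|t_\ell\|^2\Bigr)^{1/2}\,\|g_j(x)-g_k(x)\| =: C\,\|g_j(x)-g_k(x)\|,
\]
where the constant $C$ depends only on $\mathbf{t}$, not on $x$ or on the pair $(j,k)$.

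Next I would apply Lemma \ref{lem:Hada} to the transpose $M_x(\mathbf{t})^\top$, whose columns are the rows of $M_x(\mathbf{t})$ and which still has unimodular entries and determinant equal (in modulus) to $\mathfrak{D}_x(\mathbf{t})$. Taking the two columns to be $\mathrm{row}_j$ and $\mathrm{row}_k$, the lemma gives, for $\nu$-a.e.\ $x$ and every $j\ne k$,
\[
\epsilon_0 \le |\mathfrak{D}_x(\mathbf{t})| \le N^{\frac{N-1}{2}}\,\|\mathrm{row}_j - \mathrm{row}_k\| \le N^{\frac{N-1}{2}}\,C\,\|g_j(x)-g_k(x)\|.
\]
Rearranging yields $\|g_j(x)-g_k(x)\| \ge m$ with $m := \epsilon_0\bigl(N^{(N-1)/2}C\bigr)^{-1} > 0$. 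Since this bound is uniform in the pair $(j,k)$ and the exceptional $\nu$-null set on which $|\mathfrak{D}_x(\mathbf{t})|\ge\epsilon_0$ fails is a single set independent of $j,k$, taking the minimum over the finitely many pairs $j\ne k$ gives $\min_{j\ne k}|g_j(x)-g_k(x)|\ge m$ for $\nu$-a.e.\ $x$, as claimed.

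The argument is short and I do not expect a genuine obstacle; the only point requiring care is the decision to invoke Hadamard's inequality on the rows rather than the columns of $M_x(\mathbf{t})$ — the column version would instead compare the frequency shifts $t_k$, which is not what the statement asks for — and the harmless bookkeeping observation that the relevant $\nu$-null set does not depend on the chosen pair $(j,k)$.
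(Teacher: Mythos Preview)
Your proof is correct and follows essentially the same route as the paper: both arguments apply Lemma~\ref{lem:Hada} to the rows of $M_x(\mathbf{t})$ (equivalently, the columns of the transpose) and combine it with the Lipschitz bound $|e^{2\pi i s}-1|\le 2\pi|s|$ to convert a determinant lower bound into a separation bound for the $g_j(x)$. The only cosmetic difference is that the paper argues by contrapositive (assuming no uniform $m$ exists and deriving arbitrarily small determinants on sets of positive measure), whereas you argue directly and extract the explicit constant $m=\epsilon_0\bigl(N^{(N-1)/2}C\bigr)^{-1}$; your version is slightly tidier for that reason.
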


\begin{proof}
	Suppose that the conclusion is false. Then for all $\eta > 0$, the set of all $x$ such that
	
	\begin{equation*}
		\nu \left\{ x : \min_{j \neq k} \abs*{g_{j}(x) - g_{k}(x)} < \eta \right\} > 0 \md
	\end{equation*}
	
	\noindent Now, we take $t_{1} \mc \ldots \mc t_{N} \in \R^{d}$, for each $x$ in the above set with positive measure, the $j$th and the $k$th columns of the matrix $\left( e^{2 \pi i t_{i} \cdot g_{j}(x)} \right)$, denoted by ${\bf v}$ and ${\bf w}$, satisfy
	
	\begin{align*}
		\abs*{{\bf v} - {\bf w}}^{2} \leq & \sum_{i = 1}^{N} \abs*{e^{2 \pi i t_{i} \cdot g_{j}(x)} - e^{2 \pi i t_{i} \cdot g_{k}(x)}}^{2} \\
		\leq & \sum_{i = 1}^{N} (4 \pi^{2}) \abs*{t_{i}^{2}} \abs*{g_{j}(x) - g_{k}(x)}^{2} \\
		\leq & 4 \pi^{2} N \left( \max \abs*{t_{i}}^{2} \right) \cdot \eta^{2} \md
	\end{align*}
	
	By Lemma \ref{lem:Hada},
	
	\begin{equation*}
		\abs*{\det \left( e^{2 \pi i t_{i} \cdot g_{j}(x)} \right)} \leq N^{N} 4 \pi^{2} \left( \max \abs*{t_{i}}^{2} \right) \eta \md
	\end{equation*}
	
	\noindent Since $\eta$ is arbitrary, we see that the assumption is false, which completes the proof. \qed \par
\end{proof}

\subsection{Sufficient Condition} We now discuss the condition for which the converse of Theorem \ref{thm:NecessaryCondition} holds. If the multi-tiling measure is unbounded, there will be a lot of complications. When it was a multi-tile, it was studied by Cabrelli, Molter and Hare. As our goal of study is not on the unbounded measure, we will assume that the the multi-tiling measure has a bounded support. In particular, we have that 
\begin{equation} \label{eqn:BoundedG}
	\max_{i \neq j} \abs*{g_{j}(x) - g_{k}(x)} \leq M \mc \mbox{ for $\nu$ a.e. $x$.} \\
\end{equation}




Let $E$ be the set of all $x$ for which the above property holds. We let $G = H \oplus \Gamma$ as in Lemma \ref{lem:RdSubgroup}. Define 
\begin{equation*}
	\mathcal{M} = \{ g_{j}(x) - g_{k}(x) : j \neq k \mc x \in E \} \md
\end{equation*} \\
 By our assumption in (\ref{eqn:BoundedG}), ${\mathcal M}$ is a bounded subset of $G$. We may assume that 
\begin{equation*}
	\mathcal{M} \subset \bigcup_{i = 0}^{R-1} H_{i} \oplus \{ \gamma_{i} \} := {\mathcal K}
\end{equation*} \\
 where $H_{i}$ are compact subsets in $H$ and $\gamma_{i} \in \Gamma$ for all $i = 0 \mc 1 \mc \ldots \mc R - 1$. \\

\begin{theorem} \label{thm:SufficiencyCondition}
	Let $N \in \bZp$ and $\nu$ be a finite Borel measure and the multi-tiling measure $\mu$ is a bounded Borel measure.  Then we have the following:

	\begin{enumerate}[label=(\alph*), listparindent=1.5em]
		\item If $0 \notin \{ \gamma_{0} \mc \ldots \mc \gamma_{R - 1} \}$, then the determinant condition in Theorem \ref{thm:MainResult}(a) holds.
		
		\item If $\gamma_{0} = 0$ and there exists ${\bf w} \in H$ and $ m > 0$ such that  $\abs*{{\bf w} \cdot {\bf x}} \geq m > 0$ for all ${\bf x} \in H_{0}$, then the determinant condition in Theorem \ref{thm:MainResult}(a) holds.
			
		\item If $\dim H = 1$, then the converse of Theorem \ref{thm:NecessaryCondition} holds. \\\
	\end{enumerate}
\end{theorem}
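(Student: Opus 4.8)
The plan is to reduce all three parts to the construction of a single vector $\mathbf v\in\R^d$ with a Diophantine separation property. Given such a $\mathbf v$, I would take $t_k=k\mathbf v$ for $k=1,\dots,N$, so that $(M_x(\mathbf t))_{j,k}=\zeta_j^{\,k}$ with $\zeta_j=\zeta_j(x)=e^{2\pi i\,\mathbf v\cdot g_j(x)}$; this is a Vandermonde-type matrix, so, since $|\zeta_j|=1$,
$$
|\mathfrak D_x(\mathbf t)| = \prod_{1\le j<l\le N}|\zeta_l-\zeta_j|, \qquad |\zeta_l-\zeta_j| = 2\sin\bigl(\pi\,\mathrm{dist}(\mathbf v\cdot(g_l(x)-g_j(x)),\Z)\bigr).
$$
Since $\{g_l(x)-g_j(x):l\ne j\}\subset\mathcal M$ for $\nu$-a.e.\ $x$, it would then suffice to find $\mathbf v$ with $c:=\inf_{\delta\in\mathcal M}\mathrm{dist}(\mathbf v\cdot\delta,\Z)>0$: this gives $|\mathfrak D_x(\mathbf t)|\ge\bigl(2\sin(\pi\min(c,\tfrac12))\bigr)^{\binom N2}>0$ for $\nu$-a.e.\ $x$, which is precisely the determinant condition of Theorem~\ref{thm:MainResult}(a). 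So each part becomes a choice of $\mathbf v$.

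\textbf{Part (a).} Here every $\delta\in\mathcal M$ lies in some $H_i\oplus\{\gamma_i\}$ with $\gamma_i\ne0$; writing $\delta=h+\gamma_i$ with $h\in H$ and choosing $\mathbf v\in H^\perp$ kills the $H$-component, so $\mathbf v\cdot\delta=\mathbf v\cdot\gamma_i$. Since $\{\gamma_0,\dots,\gamma_{R-1}\}$ is a finite subset of $\Gamma\setminus\{0\}$ and each map $\mathbf u\mapsto\mathbf u\cdot\gamma_i$ is a nonzero linear functional on $H^\perp$, the set $\{\mathbf v\in H^\perp:\mathbf v\cdot\gamma_i\in\Z\text{ for some }i\}$ is a countable union of proper affine hyperplanes of $H^\perp$; picking $\mathbf v$ outside it yields $c=\min_i\mathrm{dist}(\mathbf v\cdot\gamma_i,\Z)>0$.

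\textbf{Part (b).} Now I would look for $\mathbf v=\mathbf v_\perp+s\mathbf w$ with $\mathbf v_\perp\in H^\perp$, $\mathbf w\in H$ the given vector, and $s>0$ small. Choose $\mathbf v_\perp$ as in part (a) so that $c_1:=\min_{i:\gamma_i\ne0}\mathrm{dist}(\mathbf v_\perp\cdot\gamma_i,\Z)>0$. For a difference $\delta=h+\gamma_i$ with $\gamma_i\ne0$ one has $\mathbf v\cdot\delta=\mathbf v_\perp\cdot\gamma_i+s\,\mathbf w\cdot h$, and boundedness of the support bounds $|\mathbf w\cdot h|$ uniformly over the finitely many compact $H_i$, so for $s$ small enough $\mathrm{dist}(\mathbf v\cdot\delta,\Z)\ge c_1/2$. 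For a difference $\delta\in H_0$ (the case $\gamma_0=0$) one has $\mathbf v\cdot\delta=s\,\mathbf w\cdot h$ with $m\le|\mathbf w\cdot h|\le M'$ on the compact set $H_0$; shrinking $s$ further so that $sM'<\tfrac12$ makes $\mathrm{dist}(\mathbf v\cdot\delta,\Z)=s|\mathbf w\cdot h|\ge sm$. Taking the minimum gives $c\ge\min(c_1/2,sm)>0$.

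\textbf{Part (c) and the main obstacle.} For $\dim H=1$ with the separation hypothesis $\min_{j\ne k}|g_j(x)-g_k(x)|\ge m$ a.e., I would arrange the decomposition so the $\gamma_i$ are distinct; if $0\notin\{\gamma_i\}$ invoke (a), and otherwise (after merging the zero terms into $i=0$) note that the $H$-parts of the differences landing in $H_0\oplus\{0\}$ have norm $\ge m$, so after replacing $H_0$ by its closure it is a compact subset of $\{h\in H:\|h\|\ge m\}$. Writing $H=\R\mathbf e$ with $\|\mathbf e\|=1$, every $\mathbf x\in H_0$ satisfies $|\mathbf e\cdot\mathbf x|=\|\mathbf x\|\ge m$, so hypothesis (b) holds with $\mathbf w=\mathbf e$ and (b) applies. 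I expect the delicate point to be the two-scale balancing in (b): $\mathbf v_\perp$ must be fixed first to control the lattice directions, and only then can $s$ be chosen small enough to simultaneously keep $s\mathbf w\cdot h$ from erasing the gap $c_1$ on those directions and keep $s\mathbf w\cdot h$ inside $(-\tfrac12,\tfrac12)$ so that $|\mathbf w\cdot h|\ge m$ on $H_0$ becomes a genuine distance-to-$\Z$ bound; that one $s$ works for all of $\mathcal M$ at once is exactly where boundedness of the support (finitely many $\gamma_i$, uniformly bounded $H_i$) is used. It also seems worth remarking that for $\dim H\ge2$ the hypothesis of (b) can fail outright --- e.g.\ if $H_0$ contains a sphere about the origin, which meets every hyperplane through $0$ --- which is why (c) is restricted to $\dim H=1$.
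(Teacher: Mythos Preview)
Your proposal is correct and follows essentially the same route as the paper: reduce to finding a single vector $\mathbf v$ via the Vandermonde choice $t_k=k\mathbf v$, then construct $\mathbf v$ separately under hypotheses (a) and (b), and derive (c) from (b). The only packaging differences are that the paper first picks $\mathbf u\in H^\perp$ with $\mathbf u\cdot\gamma_i\ne 0$ and then rescales so that $|\mathbf v\cdot\mathbf x|\le\tfrac12$ on $\mathcal K$ (avoiding your distance-to-$\Z$ formulation), and in (b) it replaces your direct triangle-inequality estimate by a continuity/compactness argument for $F(\epsilon,\mathbf x)=|(\tfrac1{4D}\mathbf u+\epsilon\mathbf w)\cdot\mathbf x|$; your version is a bit more streamlined but the content is the same.
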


In Example \ref{eg:3D}, we will show that despite being a bounded measure in (\ref{eqn:BoundedG}), the converse of Theorem \ref{thm:NecessaryCondition} is not true in general if $H$ has dimension more than one.  The condition in Theorem \ref{thm:SufficiencyCondition} (b) cannot be removed as well.

\begin{lemma} \label{lem:VectorExistence}
	Assume the condition in (a) or (b) in the Theorem \ref{thm:SufficiencyCondition}, there exists $\epsilon_{1} > 0$ and a non-zero vector ${\bf v}$ such that
	
	\begin{equation*}
		0 < \epsilon_{1} \leq \abs*{{\bf v} \cdot {\bf x}} \leq \frac{1}{2}
	\end{equation*}

	\noindent for all ${\bf x} \in {\mathcal K}$.
\end{lemma}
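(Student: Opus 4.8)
The plan is to build the vector ${\bf v}$ in the form ${\bf v} = a\,{\bf w} + {\bf u}$ with $a\,{\bf w}\in H$ and ${\bf u}\in H^{\perp}$, exploiting the orthogonal decomposition $\R^{d} = H\oplus H^{\perp}$. Since every ${\bf x}\in\mathcal{K}$ lies in some block $H_{i}\oplus\{\gamma_{i}\}$, we may write ${\bf x} = {\bf h} + \gamma_{i}$ with ${\bf h}\in H_{i}\subset H$ and $\gamma_{i}\in\Gamma\subset H^{\perp}$, and then the cross terms in the dot product vanish, giving the clean identity
$$
{\bf v}\cdot{\bf x} = a\,({\bf w}\cdot{\bf h}) + {\bf u}\cdot\gamma_{i}.
$$
In case (a) I will take $a = 0$; in case (b), ${\bf w}$ will be the vector furnished by the hypothesis and $a > 0$ will be chosen small. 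The upper bound $|{\bf v}\cdot{\bf x}|\le\frac12$ will be arranged at the very end by a homogeneous rescaling of ${\bf v}$, since $\mathcal{K}$ is bounded (a finite union of compact sets $H_{i}$ translated by the $\gamma_{i}$); so the heart of the matter is the uniform \emph{lower} bound.

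Next I would produce the $H^{\perp}$-component ${\bf u}$. After reindexing, let $\gamma_{1},\dots,\gamma_{R-1}$ be the nonzero elements among $\gamma_{0},\dots,\gamma_{R-1}$ (in case (a) this is all of them; in case (b) the omitted one is $\gamma_{0} = 0$). For each such $i$, the set $\{\,{\bf u}\in H^{\perp} : {\bf u}\cdot\gamma_{i} = 0\,\}$ is a proper linear subspace of $H^{\perp}$, and a vector space over $\R$ is never the union of finitely many proper subspaces; hence there is ${\bf u}\in H^{\perp}$ with ${\bf u}\cdot\gamma_{i}\ne 0$ for every $i$ with $\gamma_{i}\ne 0$. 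Set $\delta := \min\{\,|{\bf u}\cdot\gamma_{i}| : \gamma_{i}\ne 0\,\} > 0$ (and ${\bf u} := 0$, with $\delta$ irrelevant, in the degenerate case where every $\gamma_{i}$ vanishes, which can only occur under hypothesis (b)).

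Now the two cases. In case (a), take ${\bf v} = {\bf u}$. Then for ${\bf x} = {\bf h} + \gamma_{i}\in\mathcal{K}$ we have ${\bf v}\cdot{\bf x} = {\bf u}\cdot\gamma_{i}$, and because no $\gamma_{i}$ is zero here, $|{\bf v}\cdot{\bf x}|\ge\delta > 0$. In case (b), let ${\bf w}\in H$ and $m > 0$ be as in the hypothesis, put $C := \max_{i}\sup_{{\bf h}\in H_{i}}\|{\bf h}\| < \infty$ (finite by compactness of the $H_{i}$), and choose $a > 0$ with $a\|{\bf w}\|\,C < \delta/2$. On the block $H_{0}\oplus\{\gamma_{0}\} = H_{0}\oplus\{0\}$ we get $|{\bf v}\cdot{\bf x}| = a\,|{\bf w}\cdot{\bf h}|\ge a m$, using $|{\bf w}\cdot{\bf h}|\ge m$; on a block $H_{i}\oplus\{\gamma_{i}\}$ with $\gamma_{i}\ne 0$ we get
$$
|{\bf v}\cdot{\bf x}| \ge |{\bf u}\cdot\gamma_{i}| - a\,|{\bf w}\cdot{\bf h}| \ge \delta - a\|{\bf w}\|\,C \ge \delta/2 .
$$
Either way $|{\bf v}\cdot{\bf x}|$ is bounded below by the positive constant $\min\{am,\delta/2\}$ uniformly over $\mathcal{K}$, and ${\bf v}\ne 0$ (in case (b), $a{\bf w}\ne 0$ since a vector ${\bf w}$ with $|{\bf w}\cdot{\bf h}|\ge m$ for some ${\bf h}$ cannot vanish, and it lies in $H$ while ${\bf u}\in H^{\perp}$, so they cannot cancel).

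Finally, to secure the upper bound, pick $C' < \infty$ with $\|{\bf x}\|\le C'$ for all ${\bf x}\in\mathcal{K}$, so that $|{\bf v}\cdot{\bf x}|\le\|{\bf v}\|\,C'$. Replacing $({\bf u},a)$ by $(s{\bf u}, s a)$ for a small $s\in(0,1]$ multiplies ${\bf v}$ by $s$: the constraint $a\|{\bf w}\|C < \delta/2$ is homogeneous and thus preserved, all the lower bounds above get multiplied by $s$ and stay positive, and $\|{\bf v}\|C'$ gets multiplied by $s$; choosing $s \le (2\|{\bf v}\|C')^{-1}$ yields $|{\bf v}\cdot{\bf x}|\le\frac12$. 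Setting $\epsilon_{1}$ equal to the resulting positive lower bound finishes the argument. I expect the only subtle point to be the balancing in case (b): the perturbation $a({\bf w}\cdot{\bf h})$ must be too small to spoil the bound $|{\bf u}\cdot\gamma_{i}|\ge\delta$ on the $\gamma_{i}\ne 0$ blocks, while the very same ${\bf w}$ must still deliver a positive lower bound $\ge am$ on the $\gamma_{0}=0$ block, and both must survive the final homogeneous rescaling; the ``finitely many proper subspaces'' step and the boundedness bookkeeping are routine.
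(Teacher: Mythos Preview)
Your proof is correct and follows the same overall architecture as the paper: write ${\bf v} = a{\bf w} + {\bf u}$ with ${\bf u}\in H^{\perp}$ chosen so that ${\bf u}\cdot\gamma_i\neq 0$ for each nonzero $\gamma_i$, take $a=0$ in case~(a), and take $a>0$ small in case~(b). The difference lies in the execution of case~(b). The paper fixes ${\bf v} = \tfrac{1}{4D}{\bf u} + \epsilon{\bf w}$ and proves the lower bound on $\mathcal{K}_1 = \bigcup_{i\ge 1}(H_i+\gamma_i)$ by a continuity-and-compactness argument: since $F(0,{\bf x})\ge C/4D$ on $\mathcal{K}_1$, one covers $\mathcal{K}_1$ by finitely many balls on which $F(\epsilon,\cdot)$ stays above $C/8D$ for $\epsilon$ in a small interval. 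Your argument replaces this with the direct estimate $|{\bf u}\cdot\gamma_i + a\,{\bf w}\cdot{\bf h}|\ge \delta - a\|{\bf w}\|\,C$, which is more elementary and yields an explicit admissible $a$. Likewise, the paper builds the upper bound $\le\tfrac12$ into the initial scaling (the factors $1/2D$ and $1/4D$), whereas you defer it to a final homogeneous rescaling; this is cleaner bookkeeping and, as you note, preserves the constraint $a\|{\bf w}\|C<\delta/2$ since $\delta$ scales with ${\bf u}$. Both approaches implicitly use that in case~(b) the remaining $\gamma_1,\dots,\gamma_{R-1}$ are nonzero (equivalently, the $\gamma_i$ are distinct), which the paper also assumes without comment.
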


\begin{proof}
	Suppose that the condition of (a) holds. We take any ${\bf u} \in H^{\perp}$ such that ${\bf u} \cdot \gamma_{i} \neq 0$ for all $i$. This is possible since none of the $\gamma_{i}$ is zero and there are only finitely many $\gamma_{i}$. Let
	
	\begin{equation*}
		D = \max_{i = 0 \mc \ldots \mc R-1} \abs*{{\bf u} \cdot \gamma_{i}} \ \ \text{and} \ \ C = \min_{i = 0 \mc 1 \mc \ldots \mc R - 1} \abs*{{\bf u} \cdot \gamma_{i}} \md
	\end{equation*}
	
	\noindent Define ${\bf v} = \frac{1}{2D}{\bf u}$. Then for all ${\bf x} \in \mathcal{K}$, $\abs*{{\bf v} \cdot {\bf x}} = \abs*{{\bf v} \cdot \gamma_{i}}$ for some $i$, so
	
	\begin{equation*}
		0 < \frac{C}{2D} \leq \abs*{{\bf v} \cdot {\bf x}} \leq \frac{1}{2} \md
	\end{equation*}
	
	\noindent This shows that the conclusion holds. \\
	
	Suppose that the condition of (b) holds. We then take any ${\bf u} \in H^{\perp}$ such that ${\bf u} \cdot \gamma_{i} \neq 0$ for all $i = 1 \mc \ldots \mc R-1$. Let 
	
	\begin{equation*}
		D = \max_{i = 1 \mc \ldots \mc R-1} \abs*{{\bf u} \cdot \gamma_{i}} \ \ \mbox{and} \ \ C = \min_{i = 1 \mc \ldots \mc R-1} \abs*{{\bf u} \cdot \gamma_{i}} \md
	\end{equation*}
	
	\noindent Let $\epsilon > 0$ and ${\bf w}$ be defined in the assumption. Consider 
	\begin{equation*}
		F(\epsilon \mc {\bf x}) = \abs*{\left( \frac{1}{4D} {\bf u} + \epsilon {\bf w} \right) \cdot {\bf x}} \md
	\end{equation*}
Notice that $F$ is a continuous function on $\R^+\times {\mathcal K}$. We have the following claim: 

 \medskip
 
	\noindent{\it Claim:} There exists $\epsilon_{0} > 0$ such that for all $0 < \epsilon < \epsilon_{0}$, there exists $\eta>0$ such that $F(\epsilon \mc {\bf x}) \geq \eta$ for all ${\bf x} \in {\mathcal K}$. 

\medskip

	This will justify the lemma by taking ${\bf v} = \frac{1}{4D}{\bf u} + \epsilon {\bf w}$ since the claim implies the lower bound while we can choose $\epsilon$ sufficiently small so that $\epsilon \abs*{{\bf w} \cdot h_{i}} < \frac{C}{4D}$ for all $h_{i} \in H_{i}$, $i = 0 \mc 1 \mc \ldots \mc R-1$. By writing ${\bf x} = \gamma_{i} + h_{i}$, we have 
	
	\begin{equation*}
		\abs*{F(\epsilon \mc {\bf x})} = \abs*{\frac{1}{4D} {\bf u} \cdot \gamma_{i} + \epsilon{\bf w} \cdot h_{i}} \leq \frac{C}{4D} + \frac{C}{4D} = \frac{C}{2D} < \frac{1}{2} \md \\
	\end{equation*}
	
	The proof of this claim is divided into two cases where ${\mathcal K} = H_{0} \cup {\mathcal K}_{1}$ and $\mathcal{K}_{1} = \bigcup_{i = 1}^{R-1} (H_{i} + \gamma_{i})$. \\

	\noindent{\bf Case (1)}: ${\bf x} \in {\mathcal K}_1$. We write 
	
	\begin{equation*}
		F(\epsilon \mc {\bf x}) = \abs*{\frac{1}{4D} {\bf u} \cdot \gamma_{i} + \epsilon {\bf w} \cdot {\bf x}} \md
	\end{equation*}
	
	Note that $F(0 \mc {\bf x}) \geq \frac{C}{4D}$. By the continuity of $F$, we can find a neighborhood  $[0 \mc \epsilon_{x}] \times B({\bf x} \mc \delta_{{\bf x}})$ such that 
	
	\begin{equation} \label{eqn:F}
		F(\epsilon \mc {\bf x}') \geq \frac{C}{8D}, \ \ \ \forall (\epsilon \mc {\bf x}') \in [0 \mc \epsilon_{x}] \times B({\bf x} \mc \delta_{{\bf x}}) \md
	\end{equation}
	
	Observe that the collection of balls $\{ B({\bf x} \mc \delta_{{\bf x}}): {\bf x} \in H_{i} + \gamma_{i} \mc i = 1 \mc \ldots \mc R - 1 \}$ covers the compact set $\mathcal{K}_{1}$,  so we can find finitely many balls 
	\begin{equation*}
		\bigcup_{j = 1}^{N} B({\bf x}_{j} \mc \delta_{{\bf x}_{j}}) \supset \mathcal{K}_{1} \md
	\end{equation*}
	Take $\epsilon_{0} = \min \{ \epsilon_{{\bf x}_{j}}: j = 1 \mc \ldots \mc N \} > 0$, then whenever $\epsilon < \epsilon_{0}$, for all ${\bf x} \in \mathcal{K}_{1}$, ${\bf x} \in B({\bf x}_{j} \mc \delta_{{\bf x}_{j}})$ for some $j = 1 \mc \ldots \mc N$. As $\epsilon \leq \epsilon_{j}$, (\ref{eqn:F}) implies that 
	\begin{equation*}
		F(\epsilon \mc {\bf x}) \geq C/8D \md
	\end{equation*}

	\noindent{\bf Case (2)}: ${\bf x} \in H_{0}$. $F(\epsilon \mc {\bf x}) = \epsilon \abs*{{\bf w} \cdot {\bf x}} \geq \epsilon m$ by the assumption. Hence, if $\epsilon < \epsilon_{0}$ for $\epsilon_{0}$ as in Case (1), we can take $\eta = \min \{ C/8D \mc \epsilon m \}$. Hence, this completes the proof of the lemma. \qed 
\end{proof}

\medskip

\noindent{\bf Proof of Theorem \ref{thm:SufficiencyCondition}}. We take $t_{j} = (j - 1) {\bf v}$ for $j = 1 \mc \ldots \mc N$ where ${\bf v}$ is the vector in Lemma \ref{lem:VectorExistence}. Then $\left( e^{2 \pi i t_{j} \cdot g_{k}(x)} \right)$ is a Vandermonde matrix. Hence, 

\begin{equation*}
	\abs*{\det \left(e^{2 \pi i  j {\bf v} \cdot g_{k}(x)} \right)} = \prod_{j \neq k} \abs*{e^{2 \pi i {\bf v} \cdot g_{j}(x)} - e^{2 \pi i {\bf v} \cdot g_{k}(x)}} = \prod_{j \neq k} 2 \abs*{\sin(\pi {\bf v} \cdot g_{j}(x) - g_{k}(x))} \md
\end{equation*}

\noindent As $g_{j}(x) - g_{k}(x)\in {\mathcal M} \subset {\mathcal K}$, we can have the inequality in Lemma \ref{thm:SufficiencyCondition}. Using $\abs*{\sin x} \geq \frac{2}{\pi} \abs*{x}$ for all $x \in [-\frac{\pi}{2} \mc \frac{\pi}{2}]$, we have 

\begin{equation*}
	\abs*{\sin(\pi {\bf v} \cdot g_{j}(x) - g_{k}(x))} \geq 2 \abs*{{\bf v} \cdot g_{j}(x) - g_{k}(x)} \geq 2 \epsilon_{1} \md
\end{equation*}

\noindent Therefore, the determinant condition holds with $\epsilon_{0} = \prod_{j \neq k} (4 \epsilon_{1}) = (4 \epsilon_{1})^{N(N-1)/2}$, which shows (a) and (b). \\

We now prove part (c). Suppose that $\dim H = 1$. We can write $H = \{ t {\bf v}: t \in \R \}$ and ${\bf v}$ is a unit vector. Then the assumption on the set ${\mathcal M}$ implies that $H_{0} \subset \{ t {\bf v}: t \in  [-M \mc M] \setminus [-m \mc m] \}$ for some $M \mc m > 0$. Then any non-zero ${\bf w} \in H$ will satisfy the assumption in (ii), so we have completed the proof. \qed \par

\section{Revisiting the Square Boundary and other Examples}

In this section, we will apply our theory to several examples and illustrate the sharpness of the conditions given in Theorem \ref{thm:SufficiencyCondition}. 
\medskip

\noindent{\it Proof of Theorem \ref{thm:BndrySqResult}.}  The embedding space here is $\R^{2}$ with $G$ being the closed additive group $\Z \times \R$ whose fundamental domain $D$ is $[0 \mc 1] \times \{ 0 \}$ and $\nu$ being the singular Lebesgue measure $d \mu(x \mc y) = \chi_{[0 \mc 1]} \ dx \times \delta_0$. This measure admits a  spectrum $\Lambda = \Z \times \{ 0 \}$. 	The boundary of the connected square is then defined by the Lebesgue measure $\mu = \mu_{1} + \mu_{2}$, where $\mu_{1}$ is supported on $\{ (x \mc x) : x \in [0 \mc 1 / 2] \} \cup \{ (x \mc 1 - x) : x \in (1/2 \mc 1] \}$ and $\mu_{2}$ is supported on $\{ (x \mc - x) : x \in [0 \mc 1/2] \} \cup \{ (x \mc x - 1) : x \in (1/2 \mc 1] \}$. Both $\mu_{1}$ and $\mu_{2}$ are then easily identified as equivalent to $\nu$ via the respective translations collected in $\mathcal{G}_{(x \mc 0)}$ from \eqref{eqn:PbMapElementCollection} with $\mathcal{G}_{(x \mc 0)} = \{ g_{1}(x \mc 0) \mc g_{2}(x \mc 0) \}$, where
	
	\begin{align*}
		g_{1}(x \mc 0) & =
		\begin{cases}
			(0 \mc x) & \text{if } x \in [0 \mc \frac{1}{2}] \mc \\
			(0 \mc 1 - x) & \text{if } x \in (\frac{1}{2} \mc 1]
		\end{cases} \mc \\
		g_{2}(x \mc 0) & = 
		\begin{cases}
			(0 \mc -x) & \text{if } x \in [0 \mc \frac{1}{2}] \mc \\
			(0 \mc x - 1) & \text{if } x \in (\frac{1}{2} \mc 1]
		\end{cases} \md
	\end{align*}

	Observe then that
	
	\begin{equation*}
		 \abs*{g_{1}(x \mc 0) - g_{2}(x \mc 0)} = 2 x \mc
	\end{equation*}
	
	\noindent which goes to $0$ as $x$ goes to $0$. Hence, by the contrapositive of Theorem \ref{thm:NecessaryCondition}, there is no collection $t_{1} \mc t_{2} \in R^{2}$ for which $\abs*{\mathfrak{D}_{x}(t)}$ is uniformly bounded away from $0$ for almost all $x$. Thus, by Theorem \ref{thm:MainResult}, no exponential Riesz basis whose spectrum is given by $\cup_{k = 1}^{2} (\Lambda + t_{k})$ exists.  \qed \par

	\medskip

\begin{Example} \label{eg:SqBdry}
	{\rm On the other hand, we can define a measure supported on a separated square boundary where a vertical separation occurs at $(0 \mc 0)$ and $(1 \mc 0)$ and the top and bottom triangular vertices are translated accordingly (see Figure \ref{fig:SqBdryRB}).}

	\begin{figure}[ht!]
		\begin{center}
			\resizebox{8cm}{!}
			{
				\begin{tikzpicture}[decoration={markings,
					}]
					\draw[line width=1.5pt,-] (-5,0) -- (5,0) coordinate (xaxis);
					\draw[line width=1.5pt,-] (0,-3) -- (0,3) coordinate (yaxis);
					
					\foreach \x in {-3,0,3}
					{
						\draw[dashed] (\x , -3) -- (\x, 3);
					}
					
					\coordinate (0) at (0, 0);
					
					\path[draw,line width=1pt,postaction=decorate] (0, 0.3) -- (1.5, 1.8) -- (3, 0.3);
					\path[draw,line width=1pt,postaction=decorate] (3, -0.3) -- (1.5, -1.8) -- (0, -0.3);
					
					\node[below] at (xaxis) {$x$};
					\node[left] at (yaxis) {$y$};
					\node[below left] {$O$};
					\node[below right] at (3, 0) {$1$};
					\node[below left] at (-3, 0){$-1$};
					\node[above] at (1.5, 1.8){$\mu_{1}$};
					\node[below] at (1.5, -1.8) {$\mu_{2}$};
					\node[left] at (3.2, 0) {$\left\{ \right.$};
					\node[above left] at (3, 0) {$\delta$};
				\end{tikzpicture}
			}
			\caption{The boundary of the square whose top and bottom triangular-half pieces are separated by a minimum vertical distance $\delta$ overlaid with $G = \Z \times \R$. On this (separated) boundary, a exponential Riesz basis is admitted.}
			\label{fig:SqBdryRB}
		\end{center}
	\end{figure}
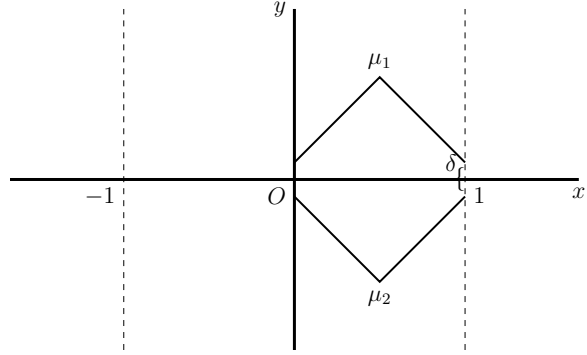

{\rm For the separated boundary, we have the Lebesgue measure $\mu = \mu_{1} + \mu_{2}$, where $\mu_{1}$ is supported on $\{ (x \mc x + \delta/2) : x \in [0 \mc 1 / 2] \} \cup \{ (x \mc 1 - x + \delta/2) : x \in (1/2 \mc 1] \}$ and $\mu_{2}$ is supported on $\{ (x \mc - x - \delta/2) : x \in [0 \mc 1/2] \} \cup \{ (x \mc x - 1 - \delta/2) : x \in (1/2 \mc 1] \}$. Here, we have $\mathcal{G}_{x \mc 0} = \{ g_{1}(x \mc 0) \mc g_{2}(x \mc 0) \}$, where}
	
	\begin{align*}
		g_{1}(x \mc 0) & =
		\begin{cases}
			(0 \mc x + \frac{\delta}{2}) & \text{if } x \in [0 \mc \frac{1}{2}] \mc \\
			(0 \mc 1 - x + \frac{\delta}{2}) & \text{if } x \in (\frac{1}{2} \mc 1]
		\end{cases} \mc \\
		g_{2}(x \mc y) & = 
		\begin{cases}
			(0 \mc -x - \frac{\delta}{2}) & \text{if } [0 \mc \frac{1}{2}] \mc \\
			(0 \mc x - 1 - \frac{\delta}{2}) & \text{if } x \in (\frac{1}{2} \mc 1]
		\end{cases} \md
	\end{align*}
{\rm We have}
	\begin{equation*}
		\abs*{g_{1}(x \mc 0) - g_{2}(x \mc 0)} = 2 x + \delta \geq \delta > 0 \md
	\end{equation*}
	
	\noindent{\rm  By Theorem \ref{thm:SufficiencyCondition} (c), we have that the converse of Theorem \ref{thm:NecessaryCondition} holds. Hence, the determinant condition holds with some $t_{1} \mc t_{2} \in {\mathbb R}^{2}$, so by Theorem \ref{thm:MainResult}, we have an exponential Riesz basis whose spectrum is of the form $\cup_{k = 1}^{2} (\Lambda + t_{k})$.}
\end{Example}

In all previous examples, the determinant condition failed because two fundamental domains got too close to each other and makes the condition $\min_{j\ne k}|g_j(x)-g_k(x)|\ge m >0$ a.e. fail in Theorem \ref{thm:NecessaryCondition}. In the following example, two fundamental domains are away from each other, yet the determinant condition fails and hence there is no structured Riesz basis of exponentials. 

\begin{Example} \label{eg:3D}
{\rm On $\R^3$, we consider the group $G = \Z\times \R^2$. Then a fundamental domain of $G$ is $D = [0,1]\times \{(0,0)\}$ and let $\nu$ be the arc length measure on $D$. Let }
$$
X = D \cup \{(x,\cos(2\pi x),\sin (2\pi x))): x\in[0,1]\}.
$$
{\rm Define also the arc-length measure on $\{(x,\cos(2\pi x),\sin (2\pi x))): x\in[0,1]\}$ by $\nu'$.
Then $\mu = \nu + \nu' $ is a multi-tiling measure and $X$ is a union of two fundamental domains of $G$ (See the Figure below). }
   \begin{figure}[h]
\begin{center}
	\includegraphics[width=10cm]{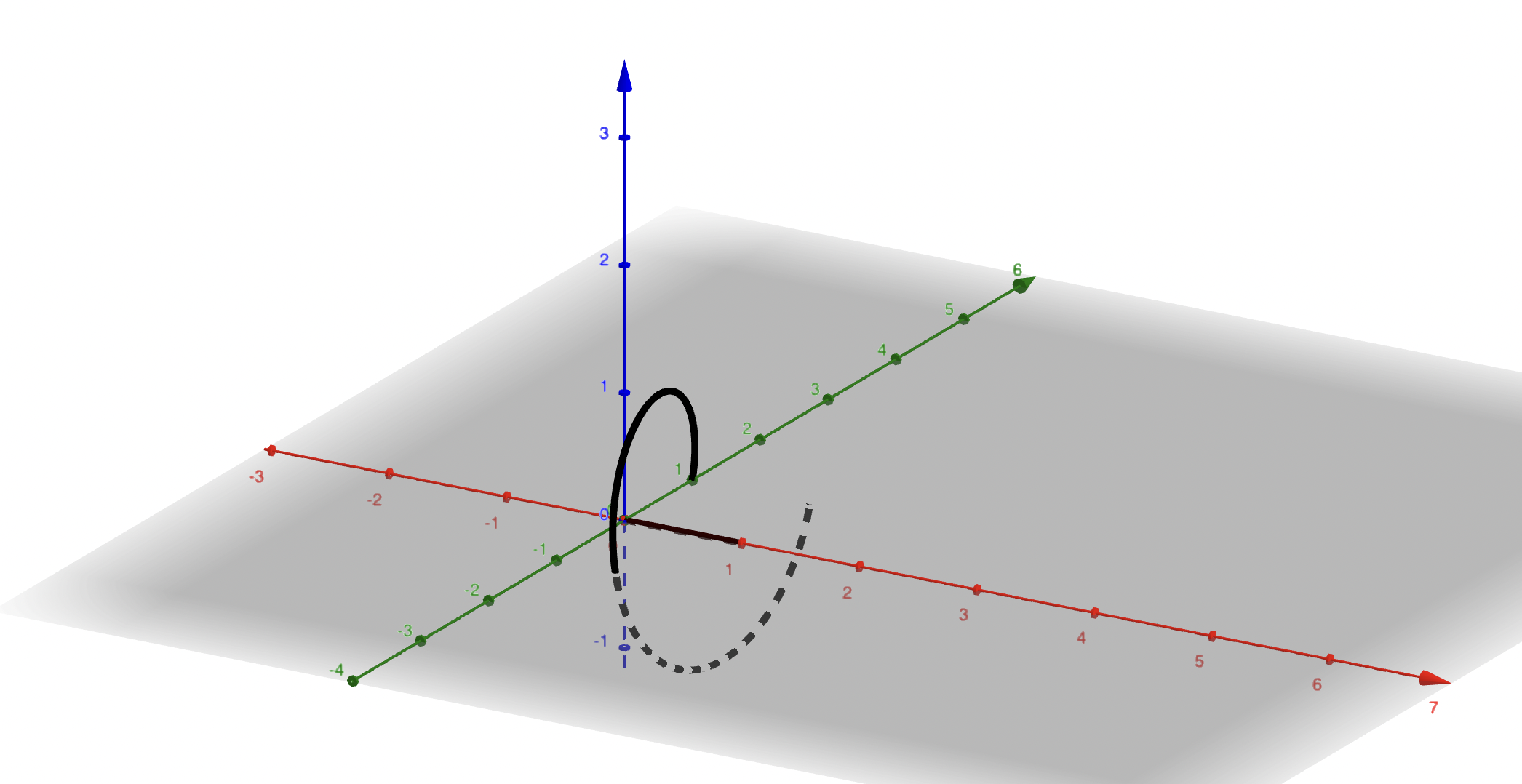}
\end{center}
\end{figure}
{\rm By a direct computation, for each $(x,0,0)\in D$, }
$$
{\mathcal G}_{(x,0,0)} = \{ (0,0,0), (0, \cos (2\pi x),\sin (2\pi x))\}
$$
{\rm Therefore,  $|g_1(x,0,0)-g_2(x,0,0)| = 1$ and hence  $\min_{j\ne k}|g_j(x,0,0)-g_k(x,0,0)| = 1>0$. However, for all ${\bf t} = ({\bf t}_1,{\bf t}_2)\in \R^{6}$, }
$$
{\mathfrak D}_{(x,0,0)}({\bf t}) = \det \begin{bmatrix}
    1 & e^{2\pi i {\bf t}_1\cdot (0,\cos(2\pi x),\sin(2\pi x))} \\ 1 & e^{2\pi i {\bf t}_2\cdot (0,\cos(2\pi x),\sin(2\pi x))}\\
\end{bmatrix} = e^{2\pi i (({\bf t}_1-{\bf t}_2)) \cdot (0,\cos (2\pi x),\sin (2\pi x))}-1.
$$
{\rm Note that each projection of ${\bf t}_1-{\bf t}_2$ onto the $yz$-plane, we can always find a unit vector $(\cos(2\pi x), \sin(2\pi x))$ such that $({\bf t}_1-{\bf t}_2)\cdot (0,\cos(2\pi x), \sin(2\pi x)) = 0$. Therefore, for this $x$, ${\mathfrak D}_{(x,0,0)}({\bf t}) = 0$. Hence, by Theorem \ref{thm:MainResult}, there is no structured Riesz basis of exponentials $\bigcup_{i=1,2} (\Lambda+{\bf t}_i)$ where $\Lambda = \Z\times \{(0,0)\}$. }

{\rm In this example, the condition in Theorem \ref{thm:SufficiencyCondition}(b) also does not hold. }

\end{Example}

\end{document}